\documentclass{article}
\usepackage{authblk}
\author[*]{Patrick Rubin-Delanchy}
\author[**]{Daniel John Lawson}
\affil[*]{Heilbronn Institute for Mathematical Research, University of Bristol, Bristol, UK}
\affil[**]{School of Social and Community Medicine and Department of Statistics, University of Bristol, Bristol, UK}
\title{Posterior predictive p-values and the convex order} 
\usepackage{amsmath, amssymb, amsthm}
\usepackage{graphicx}
\usepackage{natbib}
\usepackage[a4paper,left=3cm,right=3cm,top=3cm,bottom=3cm]{geometry}

\newtheorem{theorem}{Theorem}
\newtheorem{lemma}{Lemma}
\newtheorem{corollary}{Corollary}

\def\Prob{\mathrm{P}}
\def\mU{\mathcal{U}}

\def\mP{\mathcal{P}}

\def\mU{\mathcal{U}}

\def\rd{\mathrm{d}}
\def\N{\mathbb{N}}
\def\Z{\mathbb{Z}}
\def\R{\mathbb{R}}

\def\Ind{\mathbb{I}}

\DeclareMathOperator*{\var}{var}

\DeclareMathOperator*{\E}{E}

\def\R{\mathbb{R}}

\begin{document}
\maketitle
\begin{abstract}
Posterior predictive p-values are a common approach to Bayesian model-checking. This article analyses their frequency behaviour, that is, their distribution when the parameters and the data are drawn from the prior and the model respectively. We show that the family of possible distributions is exactly described as the distributions that are less variable than uniform on [0,1], in the convex order. In general, p-values with such a property are not conservative, and we illustrate how the theoretical worst-case error rate for false rejection can occur in practice. We describe how to correct the p-values to make them conservative in several common scenarios, for example, when interpreting a single p-value or when combining multiple p-values into an overall score of significance. We also handle the case where the p-value is estimated from posterior samples obtained from techniques such as Markov Chain or Sequential Monte Carlo. Our results place posterior predictive p-values in a much clearer theoretical framework, allowing them to be used with more assurance.
\end{abstract}

\section{Introduction}
In important papers on Bayesian model-checking, \citet{meng94} and \cite{gelman96} proposed to test the fit of a model by analysing the following posterior quantity. Let $f$ be some function measuring the discrepancy between the model and the data. The question asked is: if a new dataset were generated from the same model and parameters, what is the probability that the new discrepancy would be as large? In mathematical notation this probability is written \citep[Eq. 2.8, Eq. 7]{meng94, gelman96} 
\begin{equation}
P = \Prob\{f(D^{*}, \theta) \geq f(D, \theta) \mid D\}, \label{eq:ppp}
\end{equation}
where $\theta$ represents the model parameters, $D$ is the observed dataset, $D^*$ is a hypothetical replicated dataset generated from the model with parameters $\theta$, and $\Prob(\cdot \mid D)$ is the joint posterior distribution of $(\theta, D^*)$ given $D$. A variable $P$ of the above form is referred to as a \emph{posterior predictive p-value}. 

Since their introduction, which can be credited to \citet{guttman1967use}, \cite{rubin1984bayesianly}, \cite{meng94} or \cite{gelman96}, depending on definitions, posterior predictive p-values have received a number of criticisms. First, $P$ is a p-value, and as such its interpretation is full of pitfalls. For example, it is certainly not the probability that the model is right. Second, the dependence of $f$ on the unknown $\theta$ may seem unusual. Third, because the full posterior is used, rather than the prior \citep{box1980sampling} or a partial posterior \citep{bayarri00}, there is something self-fulfilling about this check; heuristically, one would expect $P$ to concentrate around $1/2$. 

This last issue is really part of a larger problem. At present, there is no clear mathematical description of the probabilistic behaviour of $P$, except for a few insights given in the last pages of \citet{meng94}. Over the last two decades, statements have appeared in the literature generally suggesting that the problem is `hard'. For example \citet{hjort06} say ``the interpretation and comparison of posterior predictive p-values [is] a difficult and risky matter''. \citet{bayarri00} have commented that ``Its main weakness is that there is an apparent ``double use'' of the data...This double use of the data can induce unnatural behavior''. In a discussion of \citet{gelman96}, Rubin alluded to some ``conservative operating characteristics'' \citep{rubin96}.

This article shows that the frequency behaviour of the posterior predictive p-value in \eqref{eq:ppp} is precisely described as being less variable, in the convex order, than a uniform random variable on $[0,1]$. Although the property had already been discovered by \citet[Theorem 1]{meng94}, our main contribution is that \emph{any} probability measure of this sort is the distribution of \emph{some} posterior predictive p-value (Theorem \ref{thm:main_theorem}). This leads to determining that the p-values are \emph{not conservative} in general, for example, the $2 \alpha$ bound given in \citet{meng94} is achievable (Section \ref{sec:subuniform} and Figure \ref{fig:lasso}). However, when many posterior predictive p-values are combined into an overall score, the result is sometimes conservative. For instance, we show that the product of independent and identically distributed posterior predictive p-values is stochastically larger, asymptotically, than the product of uniform variables (Fisher's method, Lemma \ref{lem:fisher_asymp}). 

A posterior predictive p-value is an informative quantity: it is the probability of the discrepancy being `as large tomorrow as it is today'. Given a sample from the posterior distribution, this probability can typically be estimated very quickly and with no difficulty. As a result, the use of this model-checking technique and its variants is widespread \citep{huelsenbeck2001bayesian, sinharay2003posterior, thornton2006approximate, steinbakk2009posterior}. These are good reasons to seek to understand the behaviour of posterior predictive p-values in repeated samples. We take no position on the philosophical validity of the approach.

In fact, understanding the behaviour of posterior predictive p-values has a more general application, that is not necessarily Bayesian. Suppose we have two random objects, $X$ and $Y$, with a known joint distribution, and only $Y$ is observed. Many common statistical models have this structure. For example, $X$ might be the underlying state in a state-space model and $Y$ the observation; or $Y$ might be a point process and $X$ an underlying random intensity, as in the Cox process \citep{daley07}. In such models we often want to test something about $Y$ based on $X$. For instance, in the standard Kalman filter model, we could be interested in testing the distance, $f(Y,X)$, between the state and the observation. Ideally we would be able to observe the true p-value, $Q = \Prob\{f(Y^*,X) \geq f(Y,X) \mid X,Y\}$, where $Y^*$ is a replicate of $Y$ conditional on the true $X$. However, this is impossible because $X$ is not observable. We therefore replace $Q$ with $P = \Prob\{f(Y^*,X) \geq f(Y,X) \mid Y\} = \E(Q \mid Y)$. Then, under the hypothesis that the model holds, how should $P$ be distributed? We will give a real example of this question arising in a cyber-security application.


Two issues that we cannot address, in the probabilistic framework that we adopt, are the following. First, we do not describe, nor even attempt to define, the frequency behaviour of $P$ if the prior on $\theta$ is improper. Second, we do not find any non-trivial \emph{lower} bound on how conservative $P$ is. This could be a matter of concern, since a high false negative rate can have particularly dangerous implications in a model-checking application. The problem is that, if $\theta,D$ and $f$ are not constrained in some way, it is possible to construct a posterior predictive p-value that is arbitrarily concentrated about $1/2$, so that a less general setup would have to be assumed. Further comments about this issue are in the Discussion.

The remainder of this article is organised as follows. Section \ref{sec:convex_order} treats the case of a single posterior predictive p-value. First, we prove our main result, that there is a posterior predictive p-value for any distribution that is less variable than uniform in the convex order, in the process also deriving an extension of a famous theorem by \citet{strassen65}. Second, we describe this family of distributions, re-proving the $2\alpha$ bound found by \citet{meng94} as a special case. Third, we construct some abstract examples of posterior predictive p-values that achieve the bound, and then present a real application in cyber-security. In Section \ref{sec:mult_test}, we treat the case of multiple posterior predictive p-values. Finally, in Section \ref{sec:estimation_error}, we compare two schemes for calculating the posterior predictive p-value from a posterior sample, both proposed by \citet{gelman96}. We show that one of the estimates, but not the other, produces a random variable that is less variable than uniform in the convex order, meaning that a number of our results continue to hold for the estimate without alteration.

\section{Main results}
\label{sec:convex_order}
We start with a joint distribution over two random elements, $\theta$ and $D$. In Bayesian statistics, this would normally be decomposed as a marginal distribution on $\theta$, called the prior, and a conditional distribution on $D \mid \theta$, called the model. For a given dataset $D$, a typical calculation of the posterior predictive p-value would proceed as follows \citep[Section 2.3]{gelman96}. First, simulate $\theta_1, \ldots, \theta_M$ from the posterior distribution of $\theta$ given $D$, for a large $M$. Second, for each $\theta_i$, simulate a replicated dataset $D^*_i$. Finally, estimate
\begin{equation}
\hat P_M = \frac{1}{M} \sum_{i=1}^M \Ind\{f(D^*_i, \theta_i) \geq f(D, \theta_i)\}, \label{eq:estimate1}
\end{equation}
where $\Ind$ is the indicator function. $P$ is the limit of $\hat P_M$ as $M \rightarrow \infty$, assuming the $\theta_i$ are independent. We will revisit the properties of the estimate under dependence and finite $M$ in Section \ref{sec:estimation_error}. For now, assume that $P$ is effectively observable for a given dataset $D$, e.g. by making $M$ large enough or through some analytical solution.

Our analysis focusses on the \emph{frequency} behaviour of $P$, meaning its behaviour when a specified joint distribution on $\theta$ and $D$ holds (heuristically, when the model and prior are right). Because $D$ is now random, $P$ is a random variable. It could be simulated as follows. To obtain a single realisation, we would draw $\theta$ from the prior, and $D$ from the model of $D \mid \theta$. Then we would discard $\theta$ and compute $P$ in \eqref{eq:ppp} conditional on $D$, e.g. via \eqref{eq:estimate1}, as if we had never seen $\theta$. To obtain multiple independent replicates of $P$, we would repeat this cycle, each time constructing a new $\theta$ and $D$.

Unless stated otherwise, the discrepancy $f(D,\theta)$ is assumed to be an absolutely continuous random variable. \citet{meng94} makes use of the identity
\begin{align*}
P &= \Prob\{f(D^{*}, \theta) \geq f(D, \theta)\mid D\}\\
& = \E[\Prob\{f(D^{*}, \theta) \geq f(D, \theta)\mid\theta,D\}\mid D],
\end{align*}
to make the following observation. For any convex function $h$, we have $\E\{h(P)\} \leq \E\{h(U)\}$, if the expectations exist, where $U$ is a uniform random variable on $[0,1]$. The proof uses the fact that the quantity $\Prob\{f(D^{*}, \theta) \geq f(D, \theta)\mid\theta,D\}$ is a random variable distributed as $U$, marginally over $\theta$ and $D$, and then applies Jensen's inequality. \citet{meng94} then finds an upper bound $\Prob[P \leq \alpha] \leq 2 \alpha$ for $\alpha \in [0,1]$. 

In fact, the property being alluded to is an important stochastic order. Let $X$ and $Y$ be two random variables with probability measures $\mu$ and $\nu$ respectively. We say that $\mu$ (respectively, $X$) is less variable than $\nu$ (respectively, $Y$) in the convex order, denoted $\mu \leq_{cx} \nu$ (or $X \leq_{cx} Y$) if, for any convex function $h$,
\[\E\{h(X)\} \leq \E\{h(Y)\},\]
whenever the expectations exist. The convex order is a statement about variability, since convex functions generally put more weight on the extremes. In fact, it has direct implications in terms of the first two moments of $X$ and $Y$. Using $h(x) = x$ and then $h(x) = -x$, two convex functions, we find $\E(X) = \E(Y)$. Then, since $\{x-\E(X)\}^2 = \{x - \E(Y)\}^2$ is a convex function in $x$, the variance of $X$ must be smaller than the variance of $Y$. In this article, we will say that a probability measure $\mP$, and a random variable distributed as $\mP$, is sub-uniform if $\mP \leq_{cx} \mU$, where $\mU$ is a uniform distribution on $[0,1]$. Posterior predictive p-values have a sub-uniform distribution.

At first glance, Meng's findings could seem quite conservative. They would suggest that, to be sure not to exceed a false positive rate of $\alpha$ when the model on $(\theta, D)$ holds, we would have to multiply our posterior predictive p-value by two. Yet, from practical experience, the variance result above, as well as a loose inspection of \eqref{eq:ppp}, we could have the impression that these p-values are already quite conservative --- even the raw p-value looks too large. This raises the question of whether the bound can be improved. More generally, it would be useful to know whether the frequency behaviour of posterior predictive p-values is well described as being sub-uniform, in other words, whether the space of distributions cannot somehow be reduced. The rest of this section addresses these questions by making the following points: 
\begin{enumerate}
\item It is possible to construct a posterior predictive p-value with any sub-uniform distribution (Theorem \ref{thm:main_theorem}).
\item Some sub-uniform distributions achieve the $2 \alpha$ bound (Corollary \ref{cor:two_alpha}).
\item Therefore, some posterior predictive p-values achieve the $2 \alpha$ bound. In fact, we can construct simple examples where this happens (Section \ref{sec:construction}). 
\end{enumerate}
This example also lends some intuition to how the problem can occur in more complicated and/or less transparent scenarios, including the real case study in Section \ref{sec:cyber}.
\subsection{A posterior predictive p-value for every sub-uniform distribution}
A famous theorem by \cite{strassen65} (see also references therein) provides a fundamental interpretation of the convex order through a martingale coupling.
\begin{theorem}[Strassen's theorem]\label{thm:strassen} 
For two probability measures $\mu$ and $\nu$ on the real line the following conditions are equivalent:
\begin{enumerate}
\item $\mu \leq_{cx} \nu$;
\item there are random variables $X$ and $Y$ with marginal distributions $\mu$ and $\nu$ respectively such that $\E (Y \mid X)=X$.
\end{enumerate}
\end{theorem}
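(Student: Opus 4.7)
The easy direction $(2) \Rightarrow (1)$ is a one-line application of the conditional Jensen inequality: if $\E(Y \mid X) = X$ and $h$ is convex with the expectations existing, then $\E\{h(Y) \mid X\} \geq h\{\E(Y \mid X)\} = h(X)$, and taking expectations gives $\E\{h(Y)\} \geq \E\{h(X)\}$.

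For the substantive direction $(1) \Rightarrow (2)$ I would use a Hahn--Banach separation argument on the space of probability measures on $\R$, equipped with a topology strong enough to keep first moments continuous (the Wasserstein-$1$ topology is natural, and the convex-order hypothesis forces $\mu,\nu$ to have a common finite mean, via $h(x)=x$ and $h(x)=-x$). Let $\mD(\mu)$ denote the set of second marginals of martingale couplings with first marginal $\mu$, i.e.\ the measures $\mu K$ for Markov kernels $K$ satisfying $\int y\, K(x,\rd y) = x$ for $\mu$-almost every $x$. The first step is to check that $\mD(\mu)$ is convex (mixtures of kernels are kernels) and closed in the chosen topology, and that $\mD(\mu)\subseteq\{\nu':\mu\leq_{cx}\nu'\}$ (the easy direction again, applied pointwise to $K(x,\cdot)$).

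Now suppose, for contradiction, that $\mu\leq_{cx}\nu$ but $\nu\notin\mD(\mu)$. Hahn--Banach supplies a continuous function $f$ (of at most linear growth, in this topology) with
\[
\int f\,\rd\nu \;>\; \sup_{\nu'\in\mD(\mu)}\int f\,\rd\nu'.
\]
The pivotal identification is that this supremum equals $\int\hat f\,\rd\mu$, where $\hat f$ is the pointwise smallest concave majorant of $f$: the upper bound follows from Jensen applied conditionally to the concave $\hat f$, and attainment follows by writing each $x\in\R$ as a convex combination $\lambda y_1+(1-\lambda)y_2=x$ with $\lambda f(y_1)+(1-\lambda)f(y_2)$ arbitrarily close to $\hat f(x)$, producing two-point kernels that saturate the bound up to $\varepsilon$.

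The contradiction then closes quickly: $-\hat f$ is convex, so $\mu\leq_{cx}\nu$ gives $\int\hat f\,\rd\nu\leq\int\hat f\,\rd\mu$, and combining with $f\leq\hat f$ yields $\int f\,\rd\nu\leq\int\hat f\,\rd\mu$, contradicting the strict separation. I expect the main technical obstacle to be the choice of topology and the verification that $\mD(\mu)$ is closed in it: weak convergence alone is too weak to preserve the mean-preserving constraint, while any topology strong enough to retain it must still admit enough continuous linear functionals to apply Hahn--Banach. Wasserstein-$1$, which metrises weak convergence together with convergence of first moments, appears to strike the right balance; an alternative route is to prove the statement first for finitely supported $\mu,\nu$ by a constructive balayage argument and then pass to the limit, which side-steps the functional-analytic subtleties at the cost of a more hands-on approximation.
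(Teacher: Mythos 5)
The paper states Strassen's theorem \emph{without proof}, citing Strassen (1965) and, for the real-line version used here, M\"uller and Stoyan (2001); there is no in-paper proof to compare against, so I evaluate your sketch on its own terms.

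Your plan is a recognisable and essentially correct outline of the classical functional-analytic proof (Strassen's own separation argument, in the Cartier--Fell--Meyer tradition). The easy direction by conditional Jensen is exactly right. For the hard direction, the pivotal identity
\[
\sup_{\nu' \in \mD(\mu)} \int f \,\rd\nu' \;=\; \int \hat f \,\rd\mu,
\]
with $\hat f$ the smallest concave majorant of $f$, is correct and is the crux, as is closing via the convexity of $-\hat f$. Two technical points deserve to be made explicit rather than left implicit. First, the closedness of your $\mD(\mu)$ in $W_1$: $W_1$-convergence of $\mu K_n$ supplies tightness together with convergence of first absolute moments, hence the uniform integrability of the second coordinate needed to push the mean-preserving constraint $\int y\,K(x,\rd y)=x$ through a weak limit of the joint laws. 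Second, a separating $f$ of merely linear growth may a priori have $\hat f \equiv +\infty$ (take $f$ convex); what rescues you is the finiteness of $\sup_{\nu'\in\mD(\mu)}\int f\,\rd\nu'$ guaranteed by the separation itself, since $\mD(\mu)$ contains mean-preserving spreads of $\mu$ pushing mass arbitrarily far along chords of $f$, so a finite supremum forces $\hat f<\infty$. Neither of these is a fatal gap, but both need to be said.

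Worth noting that, on the real line, there is a route better matched to this paper's own toolkit that bypasses the functional analysis entirely: work with the integrated distribution functions $\phi_X,\phi_Y$ of Section 2.2, for which $\mu\leq_{cx}\nu$ is equivalent to $\phi_X\leq\phi_Y$ with equal limits at $\pm\infty$, and build the martingale kernel explicitly by balayage on the open intervals where $\phi_X<\phi_Y$. That is essentially the construction the paper carries out in its proof of Theorem~\ref{thm:continuous_coupling}, and your ``finitely supported then pass to the limit'' alternative is a discrete version of it. The Hahn--Banach route buys generality (it extends past $\R$); the potential-function route buys elementarity and an explicit kernel, which is what the rest of this paper actually needs.
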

This (simpler) version of the theorem is due to \citet{muller01}. The original version holds for more general probability measures.

Strassen's theorem is central to our main result. Given a sub-uniform probability measure $\mP$, it is possible to construct a coupling, $(P,U)$, where $P$ is distributed as $\mP$, $U$ is uniform on $[0,1]$, and $\E(U \mid P) = P$. However, to make progress, certain awkward couplings need to be forbidden, namely, those for which the conditional random variable $U \mid P$ has some discrete components. The following theorem makes this possible.

\begin{theorem}
\label{thm:continuous_coupling}
Let $\mu$ and $\nu$ be two probability measures on the real line where $\nu$ is absolutely continuous. The following conditions are equivalent:
\begin{enumerate} 
\item $\mu \leq_{cx} \nu$;
\item there exist random variables $X$ and $Y$ with marginal distributions $\mu$ and $\nu$ respectively such that $E(Y \mid X)=X$ and the random variable $Y \mid X$ is either singular, i.e. $Y=X$, or absolutely continuous with $\mu$-probability one.
\end{enumerate}
\end{theorem}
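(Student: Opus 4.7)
The direction (2) $\Rightarrow$ (1) is immediate from the conditional Jensen inequality: for any convex $h$, $\E[h(Y)] = \E\{\E[h(Y)\mid X]\} \geq \E\{h(\E[Y\mid X])\} = \E[h(X)]$ whenever the expectations exist.

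For the converse, my plan is a two-stage construction. In the first stage I peel off an identity part. Let $f$ denote the density of the absolutely continuous component of $\mu$ (with respect to Lebesgue measure), let $g$ denote the density of $\nu$, and set $b(x) := f(x)\,\Ind\{f(x) \leq g(x)\}$. I place joint mass $b(x)\,\mathrm{d}x$ on the diagonal $\{Y = X\}$; this is consistent with both marginals because $b \leq f$ and $b \leq g$. The reason to take $b$ equal to $f$ on $\{f \leq g\}$ rather than the more obvious $\min(f,g)$ is to avoid creating mixed conditionals at points in $\{f > g\}$: on the support of $b$ the entire $\mu_{ac}$-mass at $x$ feeds into the identity coupling, so $Y \mid X = x$ is purely $\delta_x$; off the support of $b$, no identity coupling is used and all $\mu$-mass at that $x$ must be handled by an absolutely continuous kernel. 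In particular, any singular part of $\mu$ (atoms or singular-continuous mass) is entirely in the second stage, which is forced since $\nu$, being absolutely continuous, cannot accommodate identity coupling from such points.

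In the second stage I build a fully absolutely continuous martingale coupling between the residual measures $\mu' := \mu - b\,\mathrm{d}x$ and $\nu' := \nu - b\,\mathrm{d}x$. These have equal total mass, still satisfy $\mu' \leq_{cx} \nu'$ (subtracting the same measure from both sides preserves the defining integral inequality), and $\nu'$ remains absolutely continuous. I would design the kernel $K(x, \cdot)$ as a mixture of symmetric absolutely continuous distributions centred at $x$, for example $K(x, \cdot) = \int \mathrm{Uniform}(x - t, x + t)\,\pi(\mathrm{d}t \mid x)$: such a kernel has the martingale property automatically, each conditional is absolutely continuous, and the free weights $\pi(\cdot \mid x)$ are selected so that $\int K(x, \cdot)\,\mathrm{d}\mu'(x) = \nu'$. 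The main obstacle is establishing existence of such a selection $\pi$ in full generality, particularly when $\mu'$ retains singular or atomic components, since Strassen's theorem alone does not deliver the absolute continuity of the conditionals. A tractable route is to verify the claim first for discrete approximations of $\mu'$, where it reduces to a finite-dimensional linear feasibility question resolvable directly from the convex order, and then pass to the limit using weak compactness of martingale couplings, with the absolute continuity of $\nu'$ controlling the regularity of the limit kernel.
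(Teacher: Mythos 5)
Your direction (2)~$\Rightarrow$~(1) via conditional Jensen is correct and matches the paper. Your first-stage decomposition is also sound and cleverly set up: choosing $b=f\,\Ind\{f\leq g\}$ rather than $\min(f,g)$ does avoid mixed conditionals, subtracting $b\,\rd x$ from both sides does preserve the convex order (linear functions $\pm x$ give equality of residual means, and any convex $h$ gives $\int h\,\rd\mu' \leq \int h\,\rd\nu'$ directly), and the residual $\nu'$ remains absolutely continuous. The measure-theoretic separation of the absolutely continuous and singular parts of $\mu$ means the diagonal coupling really does produce pure $\delta_x$ conditionals $\mu$-a.e.\ on $\{f\leq g\}\cap\{f>0\}$.

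However, the second stage is not a lemma you can cite or wave at --- it is the entire content of the theorem, and as you set it up it is in fact \emph{strictly harder} than what you are trying to prove. For your residual pair $(\mu',\nu')$ you need a martingale kernel with \emph{no} singular conditionals at all (the $\delta_x$ escape hatch is unavailable: on $\{f>g\}$ a point mass at $x$ would overfill $\nu'$ there, on the singular part of $\mu'$ a point mass would create an atom in the absolutely continuous $\nu'$, and on $\{f\leq g\}$ there is no residual $\mu'$-mass). So you have reduced the theorem to a sharpened version of itself, and neither of your two suggested routes closes that gap. The mixture-of-uniforms kernel with free weights $\pi(\cdot\mid x)$ is not obviously rich enough to hit an arbitrary absolutely continuous $\nu'$ --- you have a continuum of linear constraints (one for each Borel set) and have given no argument that the feasible set is nonempty. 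The discrete-approximation-plus-compactness route fails at the limiting step: weak convergence of probability kernels does not preserve absolute continuity of the conditionals (a sequence of absolutely continuous kernels can concentrate onto atoms or singular-continuous limits), and the absolute continuity of $\nu'$ controls only the \emph{mixture} $\int K(x,\cdot)\,\mu'(\rd x)$, not the individual fibres $K(x,\cdot)$.

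For comparison, the paper avoids this trap by putting the absolute-continuity mechanism \emph{before} Strassen rather than after: it first applies an explicitly absolutely continuous, mean-preserving kernel $K$ to $X$, producing $\tilde X$ with $\mu\leq_{cx}\tilde\mu\leq_{cx}\nu$, then applies Strassen to couple $\tilde X$ with $Y$, and finally composes. The point is that the Strassen conditional $Y\mid\tilde X$ is allowed to be arbitrary; the regularity of $Y\mid X$ is inherited from the explicit smoothing kernel and from the absolute continuity of $\nu$, via a direct argument ruling out atoms at $\mu$-supported points. The construction of the smoothing kernel is done at the level of integrated distribution functions, by a piecewise-linear overapproximation of $\phi_X$ below $\phi_Y$ at a countable $\mu$-null set of knots. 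Your approach could perhaps be salvaged if you could prove the second-stage claim, but doing so would require an argument of comparable depth to the paper's construction, and the routes you sketch do not supply it.
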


The proof is relegated to the Appendix because it is quite technical. (It may be advantangeous to first consult Section \ref{sec:subuniform} on the integrated distribution function.) On the other hand, the basic idea is simple. First, a small amount of zero-mean, continuously distributed noise is added to $X$, constructing a second variable $\tilde X$ with distribution $\tilde \mu$. The noise depends on $X$ in such a way that $\mu \leq_{cx} \tilde \mu \leq_{cx} \nu$. Second, Strassen's theorem is used to form a martingale coupling of $Y$ with $\tilde X$, i.e. $\E(Y \mid \tilde X) = \tilde X$. Then, $\E(Y \mid X)=\E\{\E(Y \mid \tilde X) \mid X\}= X$ and the details of the construction ensure that, no matter how $Y$ and $\tilde X$ are coupled, $Y \mid X$ is either continuous or singular. 

From this we are able to construct a coupling that bears more resemblance to a Bayesian model-checking setup. The following result is notably relevant to the \emph{average discrepancy} proposed by \citet{gelman96}.
\begin{lemma}\label{thm:general_version}
Let $\mu$ and $\nu$ be two probability measures on the real line where $\nu$ is absolutely continuous. The following conditions are equivalent:
\begin{enumerate}
\item $\mu \leq_{cx} \nu$; 
\item there exist real random variables $X,S,\theta$ and a collection of random variables $Y_{t\in\R}$ such that
\[X = \E(Y_{\theta}\mid S),\]
where $X$ has marginal distribution $\mu$,  $Y_{t}$ has marginal distribution $\nu$ for any $t\in \R$, and $\theta$ is independent of the collection $Y_{t\in\R}$. 
\end{enumerate}
\end{lemma}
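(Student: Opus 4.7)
The result is an unpacking of Theorem \ref{thm:continuous_coupling} into a form that mirrors the posterior predictive setup, and I expect the proof to be short.

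For the implication $(2) \Rightarrow (1)$, the plan is to reduce to a standard Jensen argument. First I would observe that $Y_\theta$ has marginal distribution $\nu$: since $\theta$ is independent of the collection $(Y_t)_{t\in\R}$, conditioning on $\theta = t$ gives $Y_\theta$ the distribution of $Y_t$, namely $\nu$, and this persists under averaging over $\theta$. Then, for any convex $h$ with finite expectations, Jensen's inequality applied conditionally on $S$ yields
\[\E\{h(X)\} = \E[h\{\E(Y_\theta \mid S)\}] \leq \E[\E\{h(Y_\theta)\mid S\}] = \E\{h(Y_\theta)\} = \int h\,d\nu,\]
which is exactly $\mu \leq_{cx} \nu$.

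For the implication $(1) \Rightarrow (2)$, the plan is to invoke Theorem \ref{thm:continuous_coupling} directly. That theorem supplies random variables $(X', Y)$ with marginals $\mu$ and $\nu$ respectively such that $\E(Y \mid X') = X'$. On a suitably enlarged probability space I would introduce an auxiliary real random variable $\theta$ independent of $(X', Y)$, and then set $X := X'$, $S := X'$, and $Y_t := Y$ for every $t \in \R$. The verifications are immediate: $X \sim \mu$; each $Y_t \sim \nu$; the collection $(Y_t)_{t \in \R}$ reduces to the single variable $Y$ and is therefore independent of $\theta$; and
\[\E(Y_\theta \mid S) = \E(Y \mid X') = X' = X.\]

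The real work has already been paid for in Theorem \ref{thm:continuous_coupling}, whose martingale coupling is precisely the ingredient needed to rewrite $\mu$ as a conditional expectation of a $\nu$-distributed variable. The one conceptual point I expect to pause on is recognising that the lemma imposes no non-degeneracy requirement on the family $(Y_t)$, so the constant collection $Y_t = Y$ is admissible; no further construction is needed.
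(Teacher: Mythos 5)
Your proof is correct, and the reverse direction is genuinely different from the paper's. Your $(2) \Rightarrow (1)$ argument via conditional Jensen matches the paper exactly. For $(1) \Rightarrow (2)$, you observe that the lemma places no non-degeneracy requirement on the family $(Y_t)$, so the constant collection $Y_t := Y$ is admissible; with $S := X' (= X)$ this reduces the claim to a bare martingale coupling $\E(Y \mid X') = X'$. Notice that this shortcut does not actually use the continuity guarantee that distinguishes Theorem~\ref{thm:continuous_coupling} from Theorem~\ref{thm:strassen} --- Strassen's theorem alone already supplies such a coupling, and the absolute-continuity hypothesis on $\nu$ becomes superfluous. So your proof not only works, it shows the lemma as literally stated is weaker than what the paper's argument delivers.

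The paper instead takes the continuous or singular coupling $(X,S)$ from Theorem~\ref{thm:continuous_coupling} and builds the non-degenerate family $Y_t = F^{-1}_{S\mid X}[\{F_{S\mid X}(S) + G(t)\} \bmod 1]$, with $\theta \sim G$ independent of everything else. This is more work, but the extra structure is not wasted: in the proof of Theorem~\ref{thm:main_theorem} this same family (there called $U_t$) is recycled, and the crucial feature is that $Y_t$ is a deterministic function of $S$ and the index $t$ given $X$, so that one can manufacture a discrepancy $f(D,t)$ from it. Your constant family $Y_t \equiv Y$ could not play that downstream role, since $Y$ is not a function of $S$. In short, your argument is a cleaner and more economical proof of the lemma exactly as stated, while the paper's proof is deliberately heavier because it is a rehearsal for Theorem~\ref{thm:main_theorem}. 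If you wanted to flag something, it would be that the lemma's hypotheses and its claimed equivalence could be stated more sharply to reflect what the subsequent argument actually requires of the coupling.
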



\begin{proof}
First, we prove that (b) implies (a). Let $h$ be a convex function. Then, if the expectations exist,
\[\E\{h(X)\} = \E[h\{\E(Y_{\theta} \mid S)\}] \leq \E[\E\{h(Y_{\theta})\mid S\}] = \E\{h(Y_{t})\},\]
for an arbitrary $t \in \R$, using Jensen's inequality. Now we prove that (a) implies (b).

By Theorem \ref{thm:continuous_coupling} there exists a coupling of real random variables, $(X, S)$, such that $S \mid X$ is continuous or singular with probability one and $\E(S \mid X) = X$. Let $G$ be a continuous distribution function that is positive on $\R$. If $S \mid X$ is singular, let $Y_t=S$ for all $t \in \R$. Otherwise, $S \mid X$ has a continuous distribution function, denoted $F_{S|X}$.  If we define $Y_t$ via
\begin{equation}
Y_t=F^{-1}_{S|X}[\{F_{S|X}(S) + G(t) \}\bmod 1],\quad t \in \R,\label{eq:the_step}
\end{equation}
then $Y_t \mid X$ has the same distribution as $F^{-1}_{S|X}(U)$, where $U$ is uniformly distributed on $[0,1]$, therefore $Y_t\mid X$ is distributed as $S \mid X$ for any $X$. Hence, $Y_t$ has measure $\nu$ marginally.

Let $\theta$ be a random variable with distribution function $G$ that is independent of all previously defined random variables. If $S\mid X$ is singular then clearly $X = \E(Y_{\theta}\mid S)$. Otherwise, 
\begin{align*}
\E(Y_\theta \mid S)&=\E\left(F^{-1}_{S|X}[\{F_{S|X}(S) + G(\theta) \}\bmod 1]\right)\\
&= \E\{F^{-1}_{S|X}(U)\}=\E\{S\mid X\}=X.
\end{align*}
\end{proof}
Note that the proof is heavily reliant on the existence of a continuous coupling, guaranteed by Theorem \ref{thm:continuous_coupling}, making the step \eqref{eq:the_step} possible and essentially allowing any choice of distribution for $\theta$. We are now in a position to state our main result.

\begin{theorem}[Posterior predictive p-values and the convex order]
\label{thm:main_theorem}
$\mP$ is a sub-uniform probability measure if and only if there exist random variables $P,D,\theta$ and an absolutely continuous discrepancy $f(D,\theta)$ such that
\[P = \Prob\{f(D^*,\theta) \geq f(D,\theta) \mid D\},\]
where $P$ has measure $\mP$, $D^*$ is a replicate of $D$ conditional on $\theta$ and $\Prob(\cdot \mid D)$ is the joint posterior distribution of $(\theta,D^*)$ given $D$.
\end{theorem}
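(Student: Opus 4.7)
The theorem splits into two directions. For the forward direction I would use the argument of \citet{meng94} already recalled in the excerpt: the inner quantity $W := \Prob\{f(D^*, \theta) \geq f(D, \theta) \mid \theta, D\}$ is uniformly distributed on $[0,1]$ by the absolute continuity of the discrepancy, and $P = \E(W \mid D)$, so Jensen's inequality yields $\E\{h(P)\} \leq \E\{h(W)\} = \E\{h(U)\}$ for every convex $h$, whence $\mP \leq_{cx} \mU$.

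For the converse, I would apply Lemma \ref{thm:general_version} with $\mu = \mP$ and $\nu$ the uniform measure on $[0,1]$, producing random variables $X \sim \mP$, $S$, $\theta$, and a collection $(Y_t)_{t \in \R}$ with each $Y_t$ uniform on $[0,1]$, $\theta$ independent of $(Y_t)_{t \in \R}$, and $X = \E(Y_\theta \mid S)$; from the proof of the Lemma, $Y_t$ is the explicit function of $(S, X, t)$ given by \eqref{eq:the_step}. I would then take the parameter in the posterior predictive setup to be the Lemma's $\theta$ and the data to be the enlarged pair $D := (S, X)$, so that the prior is $G$, the model $D \mid \theta$ is the marginal law of $(S, X)$ (which does not depend on $\theta$ by independence), and the replicate $D^* = (S^*, X^*)$ is drawn as an independent copy of $D$. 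The discrepancy is defined by
\[f\bigl((s, x), \theta\bigr) := -F^{-1}_{S \mid X = x}\!\left[\{F_{S \mid X = x}(s) + G(\theta)\} \bmod 1\right],\]
so that $f(D, \theta) = -Y_\theta$; its conditional law given $\theta$ is uniform on $[-1, 0]$, hence absolutely continuous. A short computation, using that $(S^*, X^*)$ is independent of $(S, X)$ given $\theta$ and that $Y_\theta \mid \theta$ is uniform on $[0,1]$, gives
\[\Prob\{f(D^*, \theta) \geq f(D, \theta) \mid D, \theta\} = Y_\theta,\]
and integrating against the posterior $\theta \mid D$ (which is simply $G$ by independence) then yields
\[P = \int_\R Y_t(S, X) \, dG(t) = \int_0^1 F^{-1}_{S \mid X}(v) \, dv = \E(S \mid X) = X,\]
so $P \sim \mP$, as required.

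The main conceptual obstacle is the mismatch between the conditioning structures of the Lemma and of the posterior predictive p-value: the Lemma supplies a latent $X$ that is not $\sigma(S)$-measurable, yet the discrepancy $f(D, \theta)$ in the Theorem statement must be measurable with respect to $(D, \theta)$ alone. Absorbing $X$ into the data by setting $D := (S, X)$ resolves this, after which the inverse-CDF formula \eqref{eq:the_step} of Lemma \ref{thm:general_version} supplies the desired discrepancy and the change of variable $u = G(t)$ recovers the identity $\E(Y_\theta \mid S, X) = X$.
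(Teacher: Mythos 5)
Your argument is correct and follows essentially the same route as the paper: Meng's observation plus Jensen for the forward direction, and for the converse the continuous coupling guaranteed by Theorem~\ref{thm:continuous_coupling}, fed through the inverse-CDF construction of Lemma~\ref{thm:general_version} to produce a $\theta$-dependent discrepancy whose posterior predictive p-value reproduces $X$. Your explicit choice $D = (S, X)$ is in fact a small sharpening of the paper's ``$D$ implies $S$'': since $U_t$ depends on $P$ through $F_{S\mid P}$, $D$ must determine $P$ as well for $f(D,t)$ to be well-defined, and your bookkeeping makes this visible; the only thing you gloss over is the singular branch of the coupling (where one sets $Y_t = S$), which is a trivial edge case and does not affect the argument.
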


\begin{proof}
\citet[Theorem 1]{meng94} proved that $P$ is sub-uniform. To show the existence part of the proof, we now construct a coupling $\E(S \mid P) = P$ such that $S \mid P$ is continuous or singular with probability one, $P$ has marginal distribution $\mP$ and $S$ is marginally uniform on $[0,1]$. As in the proof of Lemma \ref{thm:general_version}, we arrive at a setup 
\[U_t=F^{-1}_{S|P}[\{F_{S|P}(S) + G(t) \}\bmod 1], \quad t \in \R,\]
if $S \mid P$ is continuous, and $U_t = S$ otherwise, where $G$ is some positive continuous distribution function on $\R$. 

Let $D$ be a random variable that implies $S$, i.e., there exists a function $g$ such that $S = g(D)$ with probability one, but that is otherwise independent of the other variables. Given the values of $D$ and $t$ the value of $U_t$ is known. Therefore, we can construct a discrepancy function $f$ such that $f(D,t) = \bar F^{-1}(U_t)$ with probability one, where $\bar F$ is a continuous survival function. Then, if $\theta$ has distribution $G$,
\begin{align*}
\Prob\{f(D^*,\theta) \geq f(D,\theta) \mid D\} &= \E(\Prob\{f(D^{*}, \theta) \geq f(D, \theta)\mid\theta,D\} \mid D)\\
& = \E(U_{\theta} \mid D) = \E(U_{\theta} \mid S) = P,
\end{align*}
where the last equality follows the same argument given at the end of the proof of Theorem~\ref{thm:general_version}.
\end{proof}
It is telling that the proof needs a parameter-dependent discrepancy function. It seems possible that not all sub-uniform distributions are attainable if $f$ can depend only on $D$. In fact, in his highly influential paper on Bayesian model-checking, \cite{rubin1984bayesianly} only considered p-values of this type,
\begin{equation}
\Prob\{f(D^*) \geq f(D) \mid D\}. \label{eq:rubin}
\end{equation}

It would be interesting if the frequency behaviour of this class of posterior predictive p-values turned out to be special.
\subsection{Characterising sub-uniformity}
\label{sec:subuniform}
To help explore the family of sub-uniform distributions, it will be useful to introduce the \emph{integrated} distribution function of a random variable $X$ with distribution function $F_X$,
\[\phi_X(x) = \int_{-\infty}^x F_X(t) \rd t,\]
which is defined for $x \in \R$. \citet{muller01} analysed and made extensive use of this function and its counterpart, formed from the survival function, where $(1-F_X)$ replaces $F_X$ in the above. Some of their results are restated here:
\begin{enumerate}
\item $\phi_X$ is non-decreasing and convex;
\item Its right derivative $\phi^+_X(x)$ exists and $0 \leq \phi^+_X(x) \leq 1$;
\item $\lim_{x \rightarrow  -\infty} \phi_X(x) = 0$ and $\lim_{x \rightarrow \infty} \{x - \phi_X(x)\} = \E(X)$.
\end{enumerate}
Furthermore, for any function $\phi$ satisfying these properties, there is a random variable $X$ such that $\phi$ is the integrated distribution function of $X$. The right derivative of $\phi$ is the distribution function of $X$, $F_X(x) = \phi^+(x)$.

Let $Y$ be another random variable with integrated distribution function $\phi_Y$. Then $X \leq_{cx} Y$ if and only if $\phi_X(x) \leq \phi_Y(x)$ for $x \in \R$ and $\lim_{x \rightarrow \infty} \{\phi_Y(x)-\phi_X(x)\} = 0$.

\begin{figure}[t]
  \centering
  \includegraphics[width=13cm]{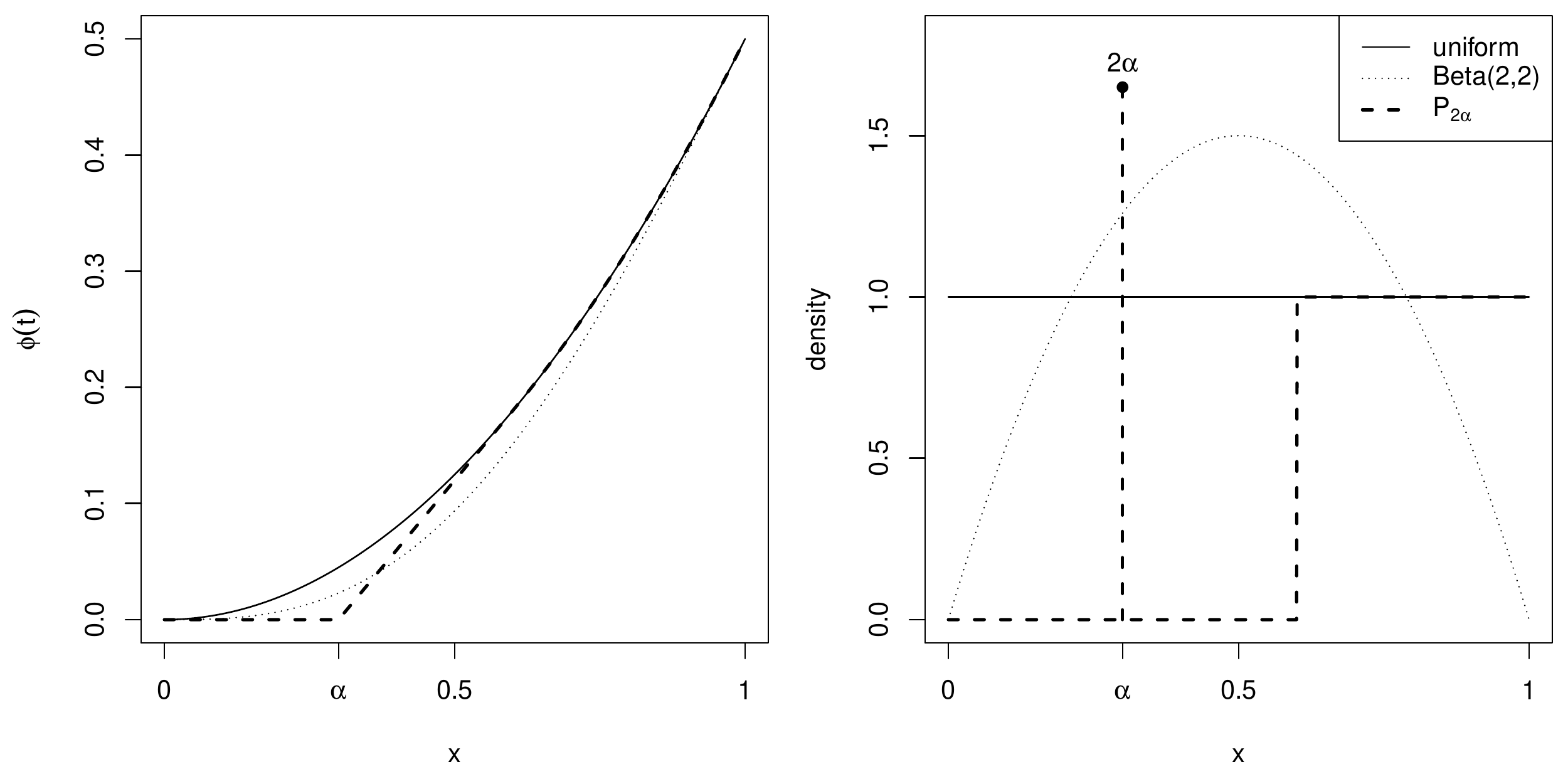}
  \caption{Examples of sub-uniform distributions. The integrated distribution function (left) and corresponding density (right) are shown for three distributions: a uniform distribution on $[0,1]$ (solid), a Beta distribution with parameters 2 and 2 (dotted) and a mixture distribution of: a point mass at $\alpha$ of probability $2\alpha$ and a uniform distribution over $[2\alpha,1]$ (dashed).}
  \label{fig:icdf}
\end{figure}
The integrated distribution function of a uniform random variable is $\phi_U(x) = x^2/2$. Figure~\ref{fig:icdf} shows this function alongside some others, corresponding to sub-uniform probability measures. From the points above, it is clear that all these functions must be non-decreasing, convex, with a right derivative between 0 and 1, always below $\phi_U(x)$, equal to $0$ at $0$ and $1/2$ at $1$. There is a one-to-one correspondence between sub-uniform probability measures and functions satisfying these criteria.

The dashed line in Figure \ref{fig:icdf} is of particular interest. It corresponds to a distribution, hereafter denoted $\mP_{2\alpha}$, which is a mixture of a point mass at $\alpha$, of probability $2\alpha$, and a uniform distribution over $[2\alpha,1]$, of probability $(1-2\alpha)$. $\mP_{2\alpha}$ is sub-uniform, as can be established by (analytically) comparing its integrated distribution function to $\phi_U$, and achieves the $2 \alpha$ bound: if $P$ is a random variable from $\mP_{2\alpha}$ then $\Prob(P \leq \alpha) = 2 \alpha$. 

This leads us to the main point of this section: Theorem~\ref{thm:main_theorem} guarantees that there is a posterior predictive p-value distributed as $\mP_{2\alpha}$, i.e., the bound of \citet{meng94} is achievable. We next provide a new insight on why the bound holds.

The following result gives a general bound on the distribution function of $X$ at a single point, given the probability measure of $Y$, when $X \leq_{cx} Y$. The proof is very simple, but we have not been able to find it elsewhere. Some related results are given by \citet{embrechts2006bounds}, on bounding the distribution function of a sum of dependent random variables with the same marginal distribution and \citet{meilijson1979convex}, on constructing a random variable $Z$, using only the distribution of $Y$, that is stochastically larger than $X$.
\begin{lemma}\label{lem:gen_2alpha}
Let $X$ and $Y$ be two random variables satisfying $X \leq_{cx} Y$, with distribution functions $F_X$ and $F_Y$ respectively. For a given $\alpha \in \R$, let
\begin{equation}
h = \min\left[1,\max\left\{w: w (x-\alpha) \leq \int_{-\infty}^x F_Y(t) \rd t, \: x \in \R\right\} \right]. \label{eq:h}
\end{equation}
Then $F_X(\alpha) \leq h$. Furthermore, there exists a random variable $\tilde X$, with distribution function $F_{\tilde X}$, such that $\tilde X \leq_{cx} Y$ and $F_{\tilde X}(\alpha) = h$.
\end{lemma}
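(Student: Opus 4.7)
The plan is to exploit the integrated-distribution-function characterisation of the convex order recalled above, namely that $X \leq_{cx} Y$ if and only if $\phi_X \leq \phi_Y$ pointwise and $\phi_Y(x) - \phi_X(x) \to 0$ as $x \to \infty$, together with the geometric reading of $h$ as the largest slope of a line through $(\alpha, 0)$ that lies weakly below the convex curve $\phi_Y$.

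For the inequality $F_X(\alpha) \leq h$, the value $F_X(\alpha)$ is the right derivative of the convex function $\phi_X$ at $\alpha$, so the subgradient inequality gives $\phi_X(x) \geq \phi_X(\alpha) + F_X(\alpha)(x - \alpha)$ for every $x \in \R$. Combining this with $\phi_X(\alpha) \geq 0$ and $\phi_X \leq \phi_Y$ yields $F_X(\alpha)(x - \alpha) \leq \phi_Y(x)$ for every $x \in \R$, so $F_X(\alpha)$ belongs to the set of admissible $w$ in \eqref{eq:h}; since also $F_X(\alpha) \leq 1$, it follows that $F_X(\alpha) \leq h$.

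For the existence of $\tilde X$ attaining the bound, choose (in the non-degenerate regime) a point $x^* > \alpha$ at which $h(x^* - \alpha) = \phi_Y(x^*)$. Such a point exists because the closed, bounded admissible set in \eqref{eq:h} attains its maximum, and matching of slopes forces $F_Y(x^*) = h$. Define $\tilde X$ as the mixture that equals $\alpha$ with probability $h$ and is distributed as $Y \mid Y \geq x^*$ with probability $1 - h$. Then $F_{\tilde X}(\alpha) = h$ by construction, and the identity $\int_{-\infty}^{x^*} F_Y(t)\,\rd t = \phi_Y(x^*) = h(x^* - \alpha)$ combined with $F_Y(x^*) = h$ gives $\E(Y \mid Y < x^*) = \alpha$. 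Hence $\tilde X$ has the same distribution as $\alpha\,\Ind\{Y < x^*\} + Y\,\Ind\{Y \geq x^*\}$, which equals $\E(Y \mid \mathcal{G})$ for $\mathcal{G} = \sigma(\Ind\{Y < x^*\},\, Y\,\Ind\{Y \geq x^*\})$, and the conditional Jensen inequality immediately yields $\tilde X \leq_{cx} Y$.

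The core argument is a clean piece of convex analysis; the main obstacle is the routine case analysis at the boundary. When $h = 1$, the tail behaviour $\phi_Y(x)/(x - \alpha) \to 1$ forces $\E(Y) \leq \alpha$, so one can take $\tilde X \equiv \E(Y)$. When $\phi_Y(\alpha) = 0$ and the infimum is approached only as $x \downarrow \alpha$, at the value $F_Y(\alpha)$, one may take $\tilde X = Y$. It also needs checking that the set in \eqref{eq:h} coincides with $\{w : w \leq \inf_{x > \alpha} \phi_Y(x)/(x - \alpha)\}$: the constraints for $x \leq \alpha$ are automatic when $w \geq 0$, and the constraints for $x > \alpha$ collapse to this infimum, reconciling the closed-set definition with the geometric picture.
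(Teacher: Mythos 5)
Your first half, showing $F_X(\alpha)\leq h$, is the same calculation as the paper's: read $F_X(\alpha)=\phi_X^+(\alpha)$, apply the subgradient inequality of the convex, non-negative function $\phi_X$, and use $\phi_X\leq\phi_Y$. That part is fine.

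The existence part takes a genuinely different route. The paper builds an integrated distribution function $\phi$ explicitly (zero up to $\alpha$, then the line of slope $h$ up to the first touching point $\beta$, then $\phi_Y$), and reads off $F_{\tilde X}=\phi^+$. You instead try to realise $\tilde X$ as a conditional expectation $\E(Y\mid\mathcal G)$ for a two-cell-type $\sigma$-field, so that $\tilde X\leq_{cx}Y$ drops out of conditional Jensen. That idea is attractive, but it has a gap when $Y$ has an atom at the tangency point $x^*$. Tangency of the line $h(x-\alpha)$ against the convex $\phi_Y$ at $x^*$ does \emph{not} force $F_Y(x^*)=h$; it only yields $F_Y(x^*{-})=\phi_Y^-(x^*)\leq h\leq\phi_Y^+(x^*)=F_Y(x^*)$. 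Moreover, working through $\phi_Y(x^*)=x^*F_Y(x^*{-})-\E[Y\Ind\{Y<x^*\}]=h(x^*-\alpha)$ shows that the identity $\E(Y\mid Y<x^*)=\alpha$ you rely on requires $F_Y(x^*{-})=h$, which can fail whenever $\{x^*\}$ carries mass. In that case your $\tilde X$ places mass $h$ at $\alpha$ and mass $1-h$ on $\{Y\geq x^*\}$, but $\Prob(Y\geq x^*)\neq 1-h$, so the bookkeeping is off and one can check $\E(\tilde X)\neq\E(Y)$, hence $\tilde X\not\leq_{cx}Y$. A concrete instance: let $Y$ put mass $1/4,1/2,1/4$ at $0,1,2$ and take $\alpha=1/2$; then $h=1/2$ and $x^*=1$ with $F_Y(1^-)=1/4$, $F_Y(1)=3/4$, and your $\tilde X$ has mean $11/12\neq 1=\E(Y)$. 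The paper's lemma is stated for arbitrary $Y$ (and they later invoke variants for discrete data), so this case matters.

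To repair the argument you would need to split the atom at $x^*$: let $\tilde X$ equal $\alpha$ with probability $h$, equal $x^*$ with probability $F_Y(x^*)-h$, and be distributed as $Y\mid Y>x^*$ with probability $1-F_Y(x^*)$ (equivalently, use an auxiliary uniform to decide randomly whether an observation $Y=x^*$ is mapped to $\alpha$ or kept at $x^*$, so that $\tilde X=\E(Y\mid\mathcal G)$ for an enlarged $\sigma$-field). That is exactly what the paper's construction does implicitly by specifying $\phi$ and reading off $F_{\tilde X}=\phi^+$, which handles atoms automatically. With that correction your Jensen-based route goes through and is a nice complement to the paper's argument; without it, the proof as written is incorrect whenever $Y$ has an atom at the critical tangency point.
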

A formal proof of this lemma is given in the Appendix, but the basic idea is illustrated in Figure~\ref{fig:icdf} with $\mP_{2\alpha}$: we find an integrated distribution function $\phi$ which has a maximal derivative at $\alpha$ subject to $\phi(x) \leq \phi_Y(x)$ for $x \in \R$ and $\lim_{x \rightarrow \infty} \{\phi_Y(x)-\phi(x)\} = 0$. For the case of a sub-uniform probability measure we find:
\begin{corollary}\label{cor:two_alpha}
Let $P$ be a sub-uniform random variable with distribution function $F_P$. Then $F_P(\alpha) \leq 2 \alpha$, for $\alpha \in [0,1/2]$. Furthermore, for any such $\alpha$, there exists a sub-uniform  random variable $\tilde P$, with distribution function $F_{\tilde P}$, satisfying $F_{\tilde P}(\alpha) = 2 \alpha$.
\end{corollary}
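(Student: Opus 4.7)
The plan is to apply Lemma~\ref{lem:gen_2alpha} directly, taking $Y$ to be uniform on $[0,1]$ and $X=P$. The integrated distribution function of $Y$ is the familiar piecewise expression $\phi_U(x)=0$ for $x\le 0$, $\phi_U(x)=x^2/2$ for $x\in[0,1]$, and $\phi_U(x)=x-1/2$ for $x\ge 1$. Both conclusions of the corollary will then follow as soon as we identify the number $h$ in \eqref{eq:h} with $2\alpha$ when $\alpha\in[0,1/2]$: the inequality $F_P(\alpha)\le h$ gives the bound, and the second clause of Lemma~\ref{lem:gen_2alpha} supplies a sub-uniform $\tilde P$ achieving it.

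First I would evaluate $h$ by a routine case analysis on the constraint $w(x-\alpha)\le \phi_U(x)$ for $x\in\R$, interpreting it geometrically as requiring the line through $(\alpha,0)$ of slope $w$ to lie under the convex curve $\phi_U$. The region $x<0$ imposes no constraint provided $w\ge 0$. On $[0,1]$, the inequality rearranges to $x^2/2-wx+w\alpha\ge 0$, and minimising the quadratic at $x=w$ (valid for $w\in[0,1]$) yields the single binding condition $w\le 2\alpha$. On $x>1$, the inequality becomes $(1-w)x\ge 1/2-w\alpha$; substituting $w=2\alpha$ reduces it, using $4\alpha^2-1=(2\alpha-1)(2\alpha+1)$, to $x\ge(1+2\alpha)/2$, which is automatic since $(1+2\alpha)/2\le 1<x$ for $\alpha\le 1/2$. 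Hence the maximum admissible $w$ is exactly $2\alpha$, and since $2\alpha\le 1$ we get $h=\min(1,2\alpha)=2\alpha$, as desired.

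For the existence clause, the cleanest route is to invoke the second statement of Lemma~\ref{lem:gen_2alpha}, which directly produces a random variable $\tilde P\le_{cx} U$ with $F_{\tilde P}(\alpha)=2\alpha$. Alternatively, and more concretely, one can exhibit $\mP_{2\alpha}$ (the mixture of a point mass at $\alpha$ with weight $2\alpha$ and a uniform on $[2\alpha,1]$ with weight $1-2\alpha$), as already flagged in Section~\ref{sec:subuniform}; its integrated distribution function is computable in closed form and checking that it lies pointwise below $\phi_U$ with the correct limit at infinity verifies sub-uniformity. The mass assignment makes $F_{\tilde P}(\alpha)=2\alpha$ transparent.

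I do not anticipate any substantive obstacle, since Lemma~\ref{lem:gen_2alpha} does the conceptual work; the only care needed is in the piecewise analysis on $[0,1]$ and $[1,\infty)$, where one must confirm that the slope $w=2\alpha$ selected by the tangency condition on $[0,1]$ does not violate the linear constraint on $[1,\infty)$. The algebra $(2\alpha-1)^2\ge 0$ is what makes this compatible precisely for $\alpha\le 1/2$, and it is also what pins down the restriction to this range in the statement.
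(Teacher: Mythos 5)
Your proposal is correct and follows the route the paper itself intends: the corollary is positioned as a direct instance of Lemma~\ref{lem:gen_2alpha} with $Y$ uniform on $[0,1]$, and the paper's own remarks point to $\mP_{2\alpha}$ for the existence clause. Your explicit computation of $h=2\alpha$ (tangency of the line $w(x-\alpha)$ to $\phi_U$ at $x=w$ on $[0,1]$, with the check on $[1,\infty)$ being automatic for $\alpha\le 1/2$) is a correct and complete way to fill in what the paper leaves implicit, and your appeal either to the second clause of Lemma~\ref{lem:gen_2alpha} or to the explicit construction $\mP_{2\alpha}$ for the achievability matches the paper exactly.
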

This corollary is only included for completeness, since everything it says is already known. The existence part of the statement is evident from $\mP_{2\alpha}$, and \citet[Eq. 5.6]{meng94} had already proved the bound.

\subsection{Two constructive examples}
\label{sec:construction}
To obtain a posterior predictive p-value that is distributed as $\mP_{2\alpha}$, rather than an arbitrary sub-uniform distribution, the structure used for the proof of Theorem \ref{thm:main_theorem} is more complicated than needed.

Let $U_0$ be a uniform random variable on $[0,1]$ and let $U_1 = U_0 \cdot \Ind(U_0 \geq 2 \alpha) + (2 \alpha-U_0) \cdot \Ind(U_0 < 2 \alpha)$. Then $(U_0 + U_1)/2$ has distribution $\mP_{2\alpha}$. This construction is due to \citet[Lemma 2]{ruschendorf1982random}. \citet{dahl2006conservativeness} found it independently and used it to form a (quite theoretical) posterior predictive p-value with distribution $\mP_{2\alpha}$. We now present two more visual examples.


Under Model 1, $X(t), t\in[0,\infty)$ denotes the position of a particle in the geometry shown in Figure \ref{fig:lasso}a as it travels from the left ($X(0)=0$), towards the loop, and then around it, either clockwise ($\theta=1$) or anti-clockwise ($\theta=0$), stopping before it has gone all the way around ($X(\infty) < 1$). The two senses of rotation are equally probable \emph{a priori}, $\Prob(\theta=0)=\Prob(\theta=1)=1/2$, and the dynamics of the particle are such that the distance travelled after one unit of time is continuously distributed, with survival function $G$, density $g$ and support on $[0,1)$.  

After one unit of time, the position of the particle is observed, $X(1)=x$, recorded going clockwise around the loop. The distance travelled along the path indexed by $\theta$ (Figure \ref{fig:lasso}b) is
$$
f(x,\theta) = \left\{
\begin{array}{llcl}
x & : x\leq 1-2\alpha, && \\
x & : x > 1-2\alpha & \mathrm{and} & \theta=1, \\
2-2\alpha-x& : x > 1-2\alpha & \mathrm{and} & \theta=0.
\end{array}
\right.
$$
The posterior probability of $\theta$ given $x$ is
\begin{align*}
p(\theta\mid x) &\propto g\{f(x,\theta)\}p(\theta),
\end{align*}
for $\theta=0,1$. We will use the distance travelled, $f$, as a discrepancy function. Let $X^*(t),t\in[0,\infty)$ be a second, hypothetical, particle in the same conditions, observed at $X^*(1) = x^*$. Given $x$ and $\theta$, the probability that the second particle would travel at least as far is $G\{f(x,\theta)\}$. Therefore, the posterior predictive p-value is
\begin{align*}
P &= \Prob\{f(x^{*}, \theta) \geq f(x, \theta) \mid x\}\\
& = \sum_{\theta=0,1} p(\theta \mid x) \Prob\{f(x^{*}, \theta) \geq f(x, \theta) \mid \theta, x\}\\
&= \sum_{\theta=0,1} p(\theta \mid x) G\{f(x,\theta)\}.
\end{align*}
If $G(t)=1-t$, $g=1$, then we cannot distinguish which direction the particle took, i.e. $p(\theta \mid x)=1/2$ for $\theta=0,1$. Then
\[P = \begin{cases} 1-x & x \leq 1-2\alpha,\\
(1-x)/2 + (x + 2\alpha -1)/2=\alpha & x > 1-2\alpha.\end{cases}\]
Now consider how $P$ would behave in repeated experiments. The observation $x$, above, is now a random variable. When $G(t)=1-t$, it is uniformly distributed on $[0,1)$, so that $P$ is distributed as $\mP_{2\alpha}$. As we vary $G$, we can construct a range of other sub-uniform distributions with $\Prob(P\leq\alpha) > \alpha$. 

Under Model 2, an observation $\mathbf{x}$ is a vector of $K$ proportions that sum to 1, or a point on the regular $(K-1)$-simplex. $\mathbf{x}$ is generated by a mixture of $K$ unimodal components, each with a mode at one distinct corner of the simplex. The components are indexed by $\theta = 1, \ldots, K$, and the corresponding corners are denoted $c_\theta$. In order to quantify the `homogeneity' of $\mathbf{x}$, we use as a discrepancy the distance between the observation and the corner corresponding to its generating component, $f(\mathbf{x},\theta)=\lVert\mathbf{x}-c_\theta\rVert$, and construct $P$ in the usual way.

Non-conservative behaviour can occur for certain parameterisations of this problem. Figure \ref{fig:lasso}c--d shows an example when the  $\mP_{2\alpha}$ distribution is achieved. This uses $K=2$ (i.e. the simplex is the unit interval), a prior $p(\theta)=1/2$ and the model
\begin{align*}
x \mid \theta=0 &\sim \mU[0,0.5+\alpha), \\
x \mid \theta=1 &\sim \mU(0.5-\alpha,1],
\end{align*}
where $\mU$ is a uniform distribution over the specified interval, $x$ is the first element of $\mathbf{x}=(x, 1-x)$ and $c_\theta=\theta$. Showing that $P$ has distribution $\mP_{2\alpha}$ proceeds analogously to Model 1. 


Model 2 is an idealisation of a real problem that is  encountered in population genetics, where the object is to identify and remove from analysis individuals with mixed genetic ancestry. The observations $\mathbf{x}$ are outputs of admixture algorithms such as STRUCTURE \citep{pritchard2000inference} and ADMIXTURE \citep{Alexander09}. Assigning  such a p-value to individuals based on their inferred admixture is one way to perform screening to create reliable reference populations.



These examples give us an intuition on how non-conservative behaviour can occur in practice. The effect comes from a) having parameter-dependent p-values $Q = \Prob\{f(D^{*}, \theta) \geq f(D, \theta)\mid\theta,D\}$ that have a conflicting view of what is `extreme' and then b) the posterior on $\theta$ not allowing a single one to dominate.
\begin{figure}[htp]
  \centering
  \includegraphics[width=15cm]{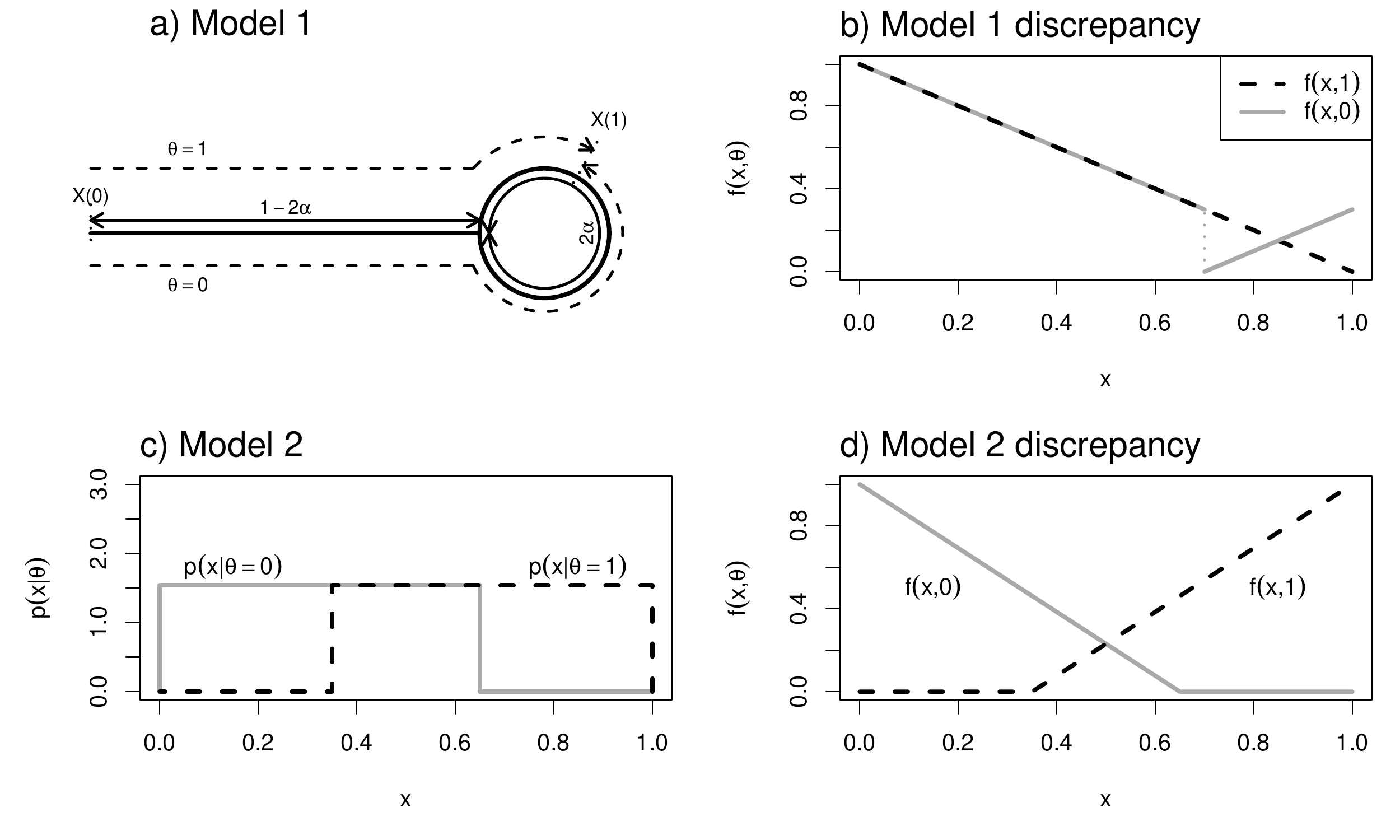}
  \caption{Two models exhibiting the $2 \alpha$ bound. a) Model 1. Consider a particle moving on the geometry shown, starting at $X(0)=0$, on the far left, and travelling either clockwise or anti-clockwise. Location is measured `clockwise' around the loop to a position $X(1)=x$. b) The discrepancy function for Model 1 is the distance travelled, measured along either of the paths $\theta=0$ or $\theta=1$. c) Model 2 density, $p(x|\theta)$, with d) the corresponding discrepancy. Both models yield a posterior predictive p-value with distribution $\mP_{2\alpha}$, see details in main text.}
  \label{fig:lasso}
\end{figure}

\subsection{A real example: intrusion detection on computer traffic}\label{sec:cyber}
There are a number of examples of the use of the posterior predictive p-value in \eqref{eq:ppp} for Bayesian model-checking, e.g. \citet{gelman96}, \citet{steinbakk2009posterior} or \cite{gelman2014bayesian}. Our interest in the problem actually stems from a different goal: anomaly detection in the presence of unknowns. The example we present is motivated by a cyber-security application, but the discussion is applicable to many problems where, loosely speaking, a test statistic is chosen on the basis of a latent parameter.

Network flow data are time-stamped records, $X_i$ say, of communications between entities on a computer network, providing limited information on the communication type and data transferred \citep{sperotto2010overview}. Modern network monitoring tools sift through these data in search for anomalies indicative of an intrusion \citep{sperotto2010overview, neil2013scan,adams2014data}. Because each record is usually generated by a single computer application, e.g., an email client or web browser, a model for these data will often include a latent parameter, $\theta_i$ say, that identifies the application that generated $X_i$. What constitutes normal and abnormal behaviour can vary substantially between applications. In testing for anomaly, therefore, it is often desirable for a test of $X_i$ to be developed on the basis of $\theta_i$. 

Within each record, there is a categorical variable describing the network protocol, referred to as the (server) port.
Ignoring a number of caveats for simplicity, this provides information about the \emph{reported} type of service a client computer is using on a server.
The well-known ports mostly fall between 0 and 1023. For example, web browsers predominantly use HTTP (80) and HTTPS (443) whilst other applications, such as Windows Update, Dropbox and file sharing tools, use a more complex range. An unusual port, given the application, could be evidence of a computer having become infected and/or engaging in covert activity. 

In what follows, the dependence on the record index, $i$, is implicit. Let $\pi$ denote the observed port, supported on $ \{0, \ldots, M\}$, reported in the record, $X$. Let $h(\cdot\: ,\theta)$ denote the probability mass function of the port used for a given application $\theta$. In practice $h$ may be learnt offline, e.g. by running different applications on a computer and observing the resulting network flow data. Conditional on $\theta$, a natural choice (and the most powerful against a uniform alternative) is to use the discrepancy function $f(\cdot\:,\theta) = -h(\cdot\:,\theta)$, i.e., report the probability of observing a port as rare as $\pi$. For known $\theta$, the p-value for $\pi$ would therefore be
\[Q = \Prob\{f(\pi^*, \theta) \geq f(\pi, \theta) \mid \theta, \pi\}= \sum_{j=1}^M h(j,\theta) \Ind\{h(j,\theta) \leq h(\pi,\theta)\},\]
which is a discrete, conservative p-value. 
Now, suppose there is a probability distribution over $\theta$ which can be interpreted as a posterior distribution on $\theta$ given $X$, denoted $p(\theta \mid X)$. This could arise from a formal Bayesian analysis or be approximated by a machine-learning classifier, e.g. Random Forests \citep{breiman2001random}. A simple means to incorporate this uncertainty is to use
\[P = \Prob\{f(\pi^*, \theta) \geq f(\pi, \theta) \mid X\} = \E(Q \mid X),\]
with the expectation taken over $\theta \mid X$. 

How should $P$ behave in normal conditions? $P$ can be conservative (aside from the issue of discreteness) if the observed value of $\pi$ strongly informs $p(\theta \mid X)$. This is the risk of a `double-use' of the data \citep{bayarri00,hjort06}. In our application, because malicious software can use an arbitary port, it would be usual (and desirable) for inference about $\theta$ given $X$ to be relatively insensitive to $\pi$.


Assuming  $p(\theta \mid X)$ is not strongly informed by $\pi$, close to uniform behaviour occurs if \emph{either} a) a single $\theta$ tends to dominate the posterior for each $X$, or
b) the probability mass functions $h(\cdot\: ,\theta)$ tend to be similar across the plausible values of $\theta$. Non-conservative behaviour occurs if there is a set of ports a) that are anomalous for all $\theta$, b) for which no $\theta$ dominates in the posterior and c) in which the ports are probability-ordered differently for different applications. 

The random variable $P$ is not sub-uniform, due to discreteness. However, we can describe $P$ by a different, but similar, stochastic order. We say that a random variable $X$ is dominated by a random variable $Y$ in the decreasing convex order, denoted $X \leq_{dcx} Y$, if, for any decreasing convex function $h$,
\[\E\{h(X)\} \leq \E\{h(Y)\},\]
whenever the expectations exist  \citep[Chapter 4]{shaked07}. We find that $P \leq_{dcx} U$,by applying the following generalisation of Theorem 1 in \citet{meng94}.
\begin{lemma}
For any measurable discrepancy function $f$, the posterior predictive p-value in \eqref{eq:ppp} satisfies
\[P \leq_{dcx} U,\]
where $U$ is a uniform random variable on $[0,1]$.
\end{lemma}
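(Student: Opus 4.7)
My plan is to follow the proof of Meng's original theorem (the $\mu \leq_{cx} \nu$ result for absolutely continuous $f$) but refactor it so that the two properties of $h$ --- decreasing and convex --- are used at different stages. The key observation is that, just as in Meng, we can write $P = \E(Q \mid D)$ by the tower property, where
\[Q = \Prob\{f(D^*,\theta) \geq f(D,\theta) \mid \theta, D\}\]
is the ``oracle'' p-value that would be used if $\theta$ were known. In the absolutely continuous case $Q$ is uniform, which is what gave Meng the full convex order; here $Q$ is merely \emph{stochastically} larger than uniform, so we have to settle for the weaker decreasing convex order against $U$.

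The first step is to apply the conditional Jensen inequality to obtain $\E\{h(P)\} \leq \E\{h(Q)\}$ for every convex $h$ whose expectations exist. This only uses convexity of $h$ and extends Meng's argument verbatim, with no assumption on $f$.

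The second step is to prove that $Q \geq_{st} U$, i.e., $\Prob(Q \leq \alpha) \leq \alpha$ for every $\alpha \in [0,1]$. I would condition on $\theta$ and write $Q = \bar F_\theta\{f(D,\theta)\}$, where $\bar F_\theta(t) = \Prob\{f(D^*,\theta) \geq t \mid \theta\}$ is the left-continuous, non-increasing survival function of $f(D^*,\theta)$ given $\theta$. Since $f(D,\theta)$ and $f(D^*,\theta)$ share their conditional law given $\theta$, a short argument shows $\Prob\{Q \leq \alpha \mid \theta\} \leq \alpha$: the level set $\{t : \bar F_\theta(t) \leq \alpha\}$ is a right-tail of the form $[t_\alpha,\infty)$ or $(t_\alpha,\infty)$ (depending on whether the infimum is attained), and the probability of that tail under $f(D,\theta) \mid \theta$ equals either $\bar F_\theta(t_\alpha)$ or $\bar F_\theta(t_\alpha^+)$, both of which are bounded by $\alpha$. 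Taking expectations over $\theta$ then gives $Q \geq_{st} U$. Because $-h$ is increasing whenever $h$ is decreasing, stochastic dominance yields $\E\{h(Q)\} \leq \E\{h(U)\}$ for all such $h$.

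Concatenating the two bounds, $\E\{h(P)\} \leq \E\{h(Q)\} \leq \E\{h(U)\}$ for every decreasing convex $h$, which is exactly $P \leq_{dcx} U$. The only subtle point is the handling of ties when establishing $Q \geq_{st} U$ --- one must be careful about left versus right limits of $\bar F_\theta$ at the threshold $t_\alpha$ --- but this is a standard calculation from the theory of randomization-free conservative p-values and is not a real obstacle. Everything else is a direct adaptation of the proof Meng used in the continuous case.
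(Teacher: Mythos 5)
Your proof is correct and takes essentially the same approach as the paper: decompose $P = \E(Q \mid D)$, exploit $Q \geq_{st} U$, and separate the use of Jensen's inequality (convexity of $h$) from the use of monotonicity (decreasingness of $h$). The only cosmetic difference is that the paper routes the monotonicity step through an explicit coupling $\tilde{Q} \leq Q$ almost surely with $\tilde{Q}$ uniform, invoking Theorem 1.A.1 of Shaked and Shanthikumar, whereas you chain the two bounds $P \leq_{cx} Q$ and $\E\{h(Q)\} \leq \E\{h(U)\}$ for decreasing $h$ directly; these are equivalent formalisations of the same argument, and your treatment of the tie/left-limit issue in establishing $Q \geq_{st} U$ supplies a detail the paper leaves as a citation.
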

\begin{proof}
Let $Q = \Prob\{f(D^{*}, \theta) \geq f(D, \theta)\mid\theta,D\}$. Then $Q \geq_{st} U$, where $\geq_{st}$ denotes the usual stochastic order \citep{shaked07}. Therefore, there exists a random variable $\tilde Q$, on the same probability space as $Q$, that has a uniform distribution marginally and satisfies $\tilde Q \leq Q$ with probability one \citep[Theorem 1.A.1]{shaked07}. Then, for any decreasing convex function $h$,
\[\E\{h(P)\} = \E[h\{E(Q \mid D)\}] \leq \E[h\{\E(\tilde Q \mid D)\}] \leq \E[\E\{h(\tilde Q)\mid D\}] = \E\{h(U)\},\]
using Jensen's inequality.
\end{proof}
As a particular application of this result, the proofs of Lemma~\ref{lem:gen_2alpha} and Corollary~\ref{cor:two_alpha} can be modified to show that $\Prob(P \leq \alpha) \leq 2 \alpha$.

\section{Multiple testing}
\label{sec:mult_test}
A consequence of our findings is that, for the first time, it is possible to address the treatment of multiple posterior predictive p-values formally. Suppose we have discrepancy functions, $f_1, \ldots, f_m$, giving posterior predictive p-values $P_1, \ldots, P_m$ respectively, that are to be combined into one, overall, anomaly score. A conservative solution would be to multiply every p-value by two before any analysis. This section investigates potential improvements.

The $P_i$ could occur from testing a few specific hypotheses, or from more generic bulk testing of the data, in which case we might obtain, for example, a p-value for every observation. These different scenarios affect whether the p-values can be treated as independent and/or identically distributed (under the null hypothesis that the model holds) and, also, what order of magnitude we might expect for $m$. In the analysis below, the $P_i$ are always assumed to be at least independent. 

Fisher's method \citep{Fisher48} is a popular way of combining p-values. Suppose we have classical p-values, $U_1, \ldots, U_m$, which are independent uniform random variables on $[0,1]$ under the null hypothesis. Then the statistic $-2\sum \log(U_i)$, called Fisher's score, has a $\chi^2$ distribution with $2m$ degrees of freedom. The null hypothesis is rejected when this statistic is large. Replacing the $U_i$ with $P_i$ in this procedure has an interesting asymptotic effect:
\begin{lemma}[Fisher's method is asymptotically conservative]\label{lem:fisher_asymp}
Let $P_1, \ldots, P_m$ and $U_1, \ldots, U_m$ each be sequences of independent and identically distributed sub-uniform and uniform random variables on $[0,1]$ respectively. For $\alpha \in (0,1]$, let $t_{\alpha,m}$ be the critical value defined by
\[\Prob\left(- 2 \sum_{i=1}^m \log(U_i) \geq t_{\alpha,m}\right) = \alpha.\]
Then there exists  $n \in \N$ such that 
\[\Prob\left(- 2 \sum_{i=1}^m \log(P_i) \geq t_{\alpha, m}\right) \leq \alpha,\]
for any $m \geq n$.
\end{lemma}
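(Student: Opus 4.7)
The plan is to reduce the statement to a comparison of sample means of $X_i := -2\log P_i$ and $V_i := -2\log U_i$ and then invoke the law of large numbers. The first step is to apply sub-uniformity to the convex function $x \mapsto -\log x$. Strassen's theorem (Theorem~\ref{thm:strassen}) provides a coupling with $P_1 = \E(U_1 \mid P_1)$, and since the uniform law has no atoms at $0$ or $1$, $P_1 \in (0,1)$ almost surely. Conditional Jensen's inequality then gives $-\log P_1 \leq \E(-\log U_1 \mid P_1)$ almost surely, and taking expectations yields $\mu := \E(-\log P_1) \leq \E(-\log U_1) = 1 < \infty$.

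Next, I would split into two cases. In the boundary case $\mu = 1$, the preceding inequality must be an equality almost surely, and the strict convexity of $-\log$ forces $U_1$ to be $\sigma(P_1)$-measurable, whence $U_1 = \E(U_1 \mid P_1) = P_1$ almost surely. So $P_1$ is uniform on $[0,1]$ and the claim holds with equality for every $m$. In the generic case $\mu < 1$, write $S_m := \sum_{i=1}^m X_i$ and $T_m := \sum_{i=1}^m V_i$. The strong law gives $S_m/m \to 2\mu$ and $T_m/m \to 2$ almost surely, while the central limit theorem applied to $T_m$ (or an elementary subsequence argument exploiting the LLN and the fact that $\Prob(T_m \geq t_{\alpha,m}) = \alpha \in (0,1)$) forces $t_{\alpha,m}/m \to 2$. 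Choosing any $c$ with $2\mu < c < 2$, we have $t_{\alpha,m} \geq cm$ for all large $m$, so
\[\Prob(S_m \geq t_{\alpha,m}) \leq \Prob(S_m/m \geq c) \to 0\]
by the strong law, and the left-hand side is in particular below $\alpha$ for all $m$ beyond some threshold $n$. The remaining boundary value $\alpha = 1$ is immediate since then $t_{1,m}=0$ and $S_m \geq 0$.

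The main subtlety is the boundary case $\mu = 1$. Without recognising that sub-uniformity combined with $\E(-\log P_1) = 1$ forces $P_1$ to be uniform, one is pushed into a second-order CLT calculation whose outcome depends on the sign of the normal quantile $z_{1-\alpha}$ and in fact fails for $\alpha > 1/2$ unless the variance bound is sharp. Collapsing the Strassen coupling via equality in strict conditional Jensen's is therefore what makes the argument go through uniformly in $\alpha$; apart from this point, the rest of the proof is a routine application of the law of large numbers together with the observation that $t_{\alpha,m}/m \to 2$.
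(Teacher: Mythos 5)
Your proof is correct and follows essentially the same route as the paper's: split according to whether $\E(-\log P_1)$ equals or is strictly less than $\E(-\log U_1)=1$, dispose of the equality case by showing it forces $P_1$ to be uniform, and in the strict case invoke the law of large numbers together with a lower bound on the growth of $t_{\alpha,m}$. The only substantive difference is that you re-derive the degeneracy step (equality of the means of $-\log P_1$ and $-\log U_1$ forces $P_1$ uniform) directly from the Strassen coupling and the equality condition in conditional Jensen's inequality, whereas the paper obtains the same fact by citing the convex-order characterisation of Shaked and Shanthikumar (Theorem~3.A.43), packaged as an auxiliary lemma.
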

Hence, we can dispense with the conservative correction entirely if $m$ is large enough and the $P_i$ are identically distributed. A formal proof is given in the Appendix. Since $\E\{-\log(P_i)\} \leq \E\{-\log(U_i)\}$, from the definition of the convex order, a direct application of the law of large numbers gets us most of way, except for the possibility $\E\{-\log(P_i)\} = \E\{-\log(U_i)\}$. In fact, this exception is no problem because, perhaps surprisingly, it implies that the $P_i$ are uniform, see \citet[Theorem 3.A.43]{shaked07} or Lemma \ref{lem:mean_and_var} in the Appendix. 

In the finite, non-identically distributed case, we were able to derive three probability bounds. None beats the other two uniformly for all $m$ and all significance levels (see Figure \ref{fig:fisher}), but of course in practice the minimum can be used.

\begin{lemma}\label{lem:fisher_finite}
Let $P_1, \ldots, P_m$ be a sequence of independent sub-uniform random variables. Then for $x \geq 2 m$, 
\begin{multline*}
\Prob\left(-2 \sum_{i=1}^m \log(P_i) \geq x\right) \leq \min\Big[S_{2m}(x - 2 m \log 2), \\
m\left/\left[m+\{(x-2m)/2\}^2\right]\right., \exp\{m-x/2-m\log(2 m/x)\}\Big],
\end{multline*}
where $S_k$ is the survival function of a $\chi^2$ variable with $k$ degrees of freedom.
\end{lemma}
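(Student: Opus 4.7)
The plan is to establish each of the three bounds by a separate classical technique — stochastic domination, Cantelli's inequality, and a Chernoff argument — each feeding on the convex order $P_i \leq_{cx} U$ (for $U$ uniform on $[0,1]$) through a different convex test function. Throughout, let $Y_i = -\log P_i$ and $Z = \sum_{i=1}^m Y_i$, so the target probability is $\Prob(2Z \geq x)$.

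For the chi-squared bound, I would invoke Corollary~\ref{cor:two_alpha}, $\Prob(P_i \leq \alpha) \leq 2\alpha$ for $\alpha \in [0,1/2]$, which is equivalent to $2 P_i \geq_{\mathrm{st}} U_i$ for independent uniform variables $U_1, \ldots, U_m$. Monotonicity of $-\log$ and the preservation of the usual stochastic order under sums of independent random variables propagate this to $-2 \sum \log P_i - 2 m \log 2 \leq_{\mathrm{st}} -2 \sum \log U_i$, whose right-hand side is $\chi^2_{2m}$ by Fisher's identity. Shifting the threshold by $2 m \log 2$ gives the first bound.

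For the Cantelli bound I need a mean and a variance bound on $Z$. The mean inequality is immediate: $-\log x$ is convex on $(0,1]$, so $\E(Y_i) \leq \E(-\log U) = 1$ and $\E(Z) \leq m$. The variance bound is the delicate step; the plan is to use the convex test function $g(x) = (\log x + 1)^2$, whose second derivative $-2\log x / x^2$ is non-negative on $(0,1]$, and which satisfies $\E\{g(U)\} = 1$ by a short integration. The convex order then yields $\E\{(\log P_i + 1)^2\} \leq 1$, which expands to $\E(Y_i^2) \leq 2\E(Y_i)$; hence $\var(Y_i) \leq \E(Y_i)\{2 - \E(Y_i)\} \leq 1$ by maximising $y(2-y)$ over $y \in [0,1]$. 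Summing over $i$ gives $\var(Z) \leq m$. Cantelli's inequality applied with $t = x/2 - \E(Z) \geq (x - 2m)/2 \geq 0$ (using $x \geq 2m$) then delivers $\Prob(Z \geq x/2) \leq \var(Z)/\{\var(Z) + t^2\} \leq m / \{m + ((x - 2m)/2)^2\}$. Identifying $(\log x + 1)^2$ as the correct convex test function is, to my mind, the main obstacle of the whole lemma.

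For the Chernoff bound, convexity of $x \mapsto x^{-\lambda}$ on $(0, \infty)$ combined with the convex order gives $\E(P_i^{-\lambda}) \leq \E(U^{-\lambda}) = (1-\lambda)^{-1}$ for $\lambda \in (0,1)$, and independence yields $\E(e^{\lambda Z}) \leq (1-\lambda)^{-m}$. Markov's inequality gives $\Prob(Z \geq x/2) \leq e^{-\lambda x/2}(1-\lambda)^{-m}$, and the minimiser — solving $x/2 = m/(1-\lambda)$ — is $\lambda^\star = 1 - 2m/x$, which is non-negative precisely because $x \geq 2m$. Substituting and simplifying produces $\exp\{m - x/2 - m \log(2m/x)\}$. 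Since the three bounds have been established separately, the final bound is their minimum, as stated.
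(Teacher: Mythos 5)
Your proposal is correct and follows essentially the same route as the paper: stochastic domination from the $2\alpha$ bound for the $\chi^2$ tail, Cantelli with mean and variance bounds on $-\log P_i$, and a Chernoff/MGF argument with the moment bound $\E(P_i^{-\lambda}) \leq (1-\lambda)^{-1}$. You even identify the same pivotal test function $(\log x + 1)^2$ used in the paper's Lemma on the mean and variance of $-\log P$; the only cosmetic difference is that you reach $\var(-\log P_i)\leq 1$ by expanding $\E\{(\log P_i+1)^2\}\leq 1$ and maximising $y(2-y)$, whereas the paper invokes the minimum-mean-squared-deviation property of the mean.
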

The first uses the $2\alpha$ bound directly (Corollary \ref{cor:two_alpha}). The second uses bounds on the mean and variance of $-\log(P_i)$ (given in Lemma \ref{lem:mean_and_var}, in the Appendix) and then applies the Chebyshev-Cantelli inequality. The third is based on a bound on the moment generating function of $-\log(P_i)$. The derivation details are in the Appendix.

\begin{figure}[t]
  \centering
  \includegraphics[width=13cm]{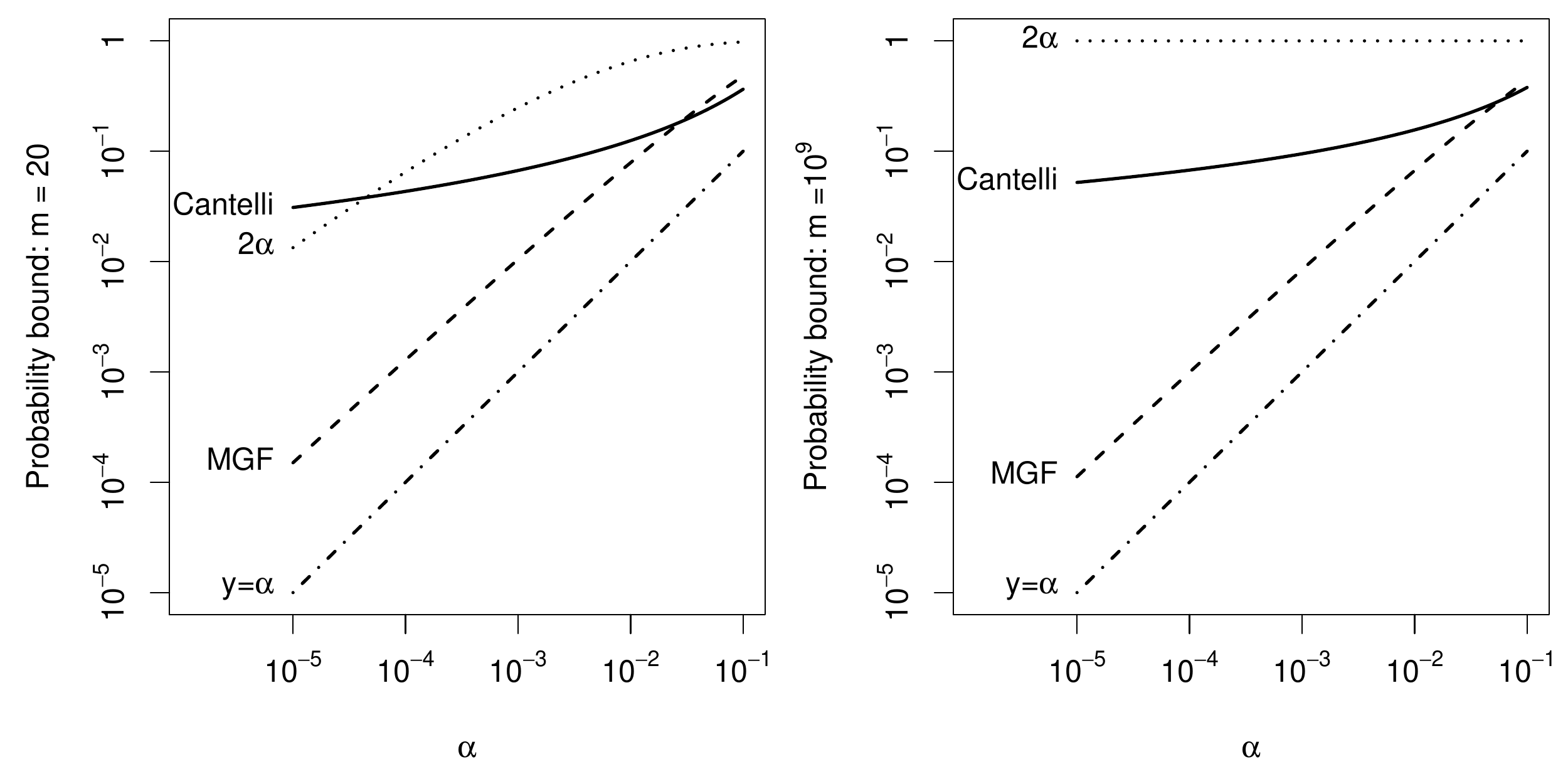}
  \caption{Comparison of the probability bounds for different nominal Fisher scores. Lemma~\ref{lem:fisher_finite} gives explicit formulae for $2 \alpha$, Cantelli and MGF, in that order. The line $y=\alpha$ provides the nominal score, i.e., the upper tail probability of the Fisher score when the component p-values are uniformly distributed. Both axes are on the logarithmic scale.}
  \label{fig:fisher}
\end{figure}

Figure \ref{fig:fisher} presents the behaviour of the different bounds under different conditions. For a given $m$ (20 on the left and 1 billion on the right) and $\alpha$, we compute the critical value, $t_{\alpha,m}$. The curves show the bound given by each formula, i.e. inputting $x = t_{\alpha, m}$ in Lemma~\ref{lem:fisher_finite}, as $\alpha$ ranges from $10^{-5}$ to $0.1$. For low $\alpha$, the bound based on the moment generating function, marked MGF, is by far the best. For large $m$, the bound based on multiplying every p-value by two, essentially the method we are trying to beat, performs very poorly. 

Rather than combine the p-values, it may be of preliminary interest to investigate just the most significant p-value, $\min(P_i)$. We may want to recalibrate this statistic to account for multiple testing. Here, to be conservative, it turns out that we cannot improve over doubling every p-value before recalibrating. This is shown in the next lemma.

\begin{lemma}
Let $P_1, \ldots, P_m$ and $U_1, \ldots, U_m$ each be sequences of independent sub-uniform and uniform random variables on $[0,1]$, respectively. For $x \in [0,1]$, let  $q = 1- (1-x)^m = \Prob\{\min(U_i)\leq x\}$. Then
\begin{align*}
\Prob\{\min(P_i) \leq x\} &\leq 1-(1-2x)^m,\\
&= 1-\{2(1-q)^{1/m}-1\}^m,
\end{align*}
which is no larger than $2 q$ and tends to $2q - q^2$ as $m \rightarrow \infty$. Furthermore, this bound is achievable if the $P_i$ are independent and identically distributed.
\end{lemma}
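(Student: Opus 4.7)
The plan is to apply Corollary~\ref{cor:two_alpha} pointwise to each $P_i$ and then invoke independence. For $x \in [0,1/2]$, Corollary~\ref{cor:two_alpha} gives $\Prob(P_i \leq x) \leq 2x$, so $\Prob(P_i > x) \geq 1-2x \geq 0$, and independence yields
\[\Prob\{\min_i P_i > x\} = \prod_{i=1}^m \Prob(P_i > x) \geq (1-2x)^m,\]
which rearranges to the stated bound; for $x > 1/2$ the bound is vacuous. The alternative form $1 - \{2(1-q)^{1/m} - 1\}^m$ is obtained by inverting $q = 1 - (1-x)^m$ to get $1 - 2x = 2(1-q)^{1/m} - 1$ and substituting.

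To show the bound does not exceed $2q$, I would set $f(x) = 2(1-x)^m - (1-2x)^m - 1$ on $[0,1/2]$ and note $f(0) = 0$ together with $f'(x) = 2m\{(1-2x)^{m-1} - (1-x)^{m-1}\} \leq 0$ (since $0 \leq 1-2x \leq 1-x$), hence $f \leq 0$, i.e.\ $1-(1-2x)^m \leq 2q$. For the limit $2q - q^2$ (with $q$ held fixed so that $x = 1 - (1-q)^{1/m} \to 0$), set $r = (1-q)^{1/m}$, so $r - 1 = O(1/m)$ and $(1-2x)^m = (2r-1)^m$. A two-term expansion of $\log(2r-1) = \log\{1 + 2(r-1)\}$ shows it equals $2\log r + O((r-1)^2)$; multiplying by $m$ and using $m \log r = \log(1-q)$ gives $m \log(2r-1) \to 2\log(1-q)$, whence $(1-2x)^m \to (1-q)^2$ and the bound tends to $1 - (1-q)^2 = 2q - q^2$.

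Achievability is immediate: take the $P_i$ i.i.d.\ from the distribution $\mP_{2x}$ constructed in Section~\ref{sec:subuniform}, which is sub-uniform and satisfies $\Prob(P_i \leq x) = 2x$ exactly, so independence forces equality throughout the display. No step here is a serious obstacle; the one place requiring care is the asymptotic calculation, where $\log(2r-1)$ and $2\log r$ share the same leading term $2(r-1)$, and one must verify that the $O((r-1)^2)$ discrepancy remains negligible after multiplication by $m$, which it is because $r - 1 = O(1/m)$.
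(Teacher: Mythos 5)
Your proof is correct and follows the same overall strategy as the paper: apply Corollary~\ref{cor:two_alpha} to each $P_i$, use independence of the survival functions, and establish achievability via the i.i.d.\ $\mP_{2x}$ construction. The two auxiliary facts are handled differently, however. For the claim that the bound never exceeds $2q$, the paper argues that $\{2(1-q)^{1/m}-1\}^m$ is increasing in $m$ (holding $q$ fixed), so the bound is maximised at $m=1$; you instead fix $m$ and do calculus in $x$ on $f(x) = 2(1-x)^m - (1-2x)^m - 1$, showing $f(0)=0$ and $f' \leq 0$. Your version is arguably tighter as written, since the paper's one-line monotonicity claim quietly glosses over the fact that $m$ appears in both the base and the exponent. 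For the asymptotic $2q-q^2$, the paper rewrites $\{2(1-q)^{1/m}-1\}^m = (1-q)\{2-(1-q)^{-1/m}\}^m$ and invokes $(1+a/m)^m \to e^a$; your Taylor expansion of $\log(2r-1)$ around $r=1$ reaches the same endpoint, and you correctly note that the $O((r-1)^2)$ remainder is killed by the factor $m$ because $r-1 = O(1/m)$. One small point: for $x > 1/2$ the displayed bound is not merely vacuous --- for even $m$ one has $(1-2x)^m > 0$, so $1-(1-2x)^m < 1$, and a point mass at $1/2$ (which is sub-uniform) would violate it. The lemma is really only meaningful for $x \in [0,1/2]$, as both your proof and the paper's implicitly require through Corollary~\ref{cor:two_alpha}.
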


\begin{proof}
Let $F_i$ denote the distribution function of $P_i$. Then
\begin{align*}
\Prob\{\min(P_i) \leq x\} &= 1-\prod \{1-F_i(x)\}\\
&\leq 1 - (1- 2x)^m\\
& = 1-\{2(1-q)^{1/m}-1\}^m \\
& = 1 - (1-q) (2 - (1-q)^{-1/m})^m\\
& \rightarrow 1 - (1-q)^2 = 2 q -q^2
\end{align*}
using Corollary \ref{cor:two_alpha} in the second line (and the fact that the bound is achievable), and the formulae $(1+a/m)^m \rightarrow \exp(a)$ and $m (a^{1/m} -1) \rightarrow \log(a)$ in the fifth line. The expression $\{2(1-q)^{1/m}-1\}^m$ is an increasing function of $(1-q)^{1/m}$, which is itself increasing in $m$, therefore the composition is increasing. Hence, $1-\{2(1-q)^{1/m}-1\}^m$ attains its maximum at $m=1$, where it is $2q$.
\end{proof}
We do not pursue the topic of multiple testing any further, but clearly there is scope for further research in this direction. 
\section{Estimation schemes}
\label{sec:estimation_error}
In practice, the posterior predictive p-value will often be estimated by simulation. We now characterise the distribution of the estimate. 
Assume that, for any $D$, we can sample a sequence $\theta_1, \ldots, \theta_M$, that may or may not be dependent, from the posterior distribution of $\theta$ given $D$. Furthermore, for any $\theta_i$, we can simulate a replicate dataset $D^*_i$ independently. These are fairly usual conditions. A typical reason for the $\theta_i$ to be dependent is for them to have been generated by a Markov chain Monte Carlo algorithm. 

Suppose $M=1$ in \eqref{eq:estimate1}. $P$ is estimated from one indicator, $\hat P_1 = \Ind\{f(D^*_1, \theta_1) \geq f(D, \theta_1)\}$. Since $f(D^*, \theta_1)$ and $f(D, \theta_1)$ are identically distributed, marginally, $\hat P_1$ is a Bernoulli random variable with success probability 1/2 (remember $f(D,\theta)$ is absolutely continuous). This not a sub-uniform random variable; in fact, with respect to the convex order,  $\hat P_1$ is the maximal random variable that has mean $1/2$ and support on $[0,1]$ \citep[Theorem 3.A.24]{shaked07}. Although the point is somewhat pedantic, for any fixed and finite $M$ the calculation \eqref{eq:estimate1} will usually return identically zero or one with some positive probability, so that the estimate can rarely be sub-uniform. 

Instead, suppose it is possible to compute $\Prob\{f(D^*_i, \theta_i) \geq f(D, \theta_i) \mid \theta_i,D\}$, for any $\theta_i$ and $D$, and consider the alternative estimate, also mentioned in \citet[Section 2.3]{gelman96},
\begin{equation}
\hat R_M = \frac{1}{M} \sum_{i=1}^M \Prob\{f(D^*_i, \theta_i) \geq f(D, \theta_i) \mid \theta_i,D\}.\label{eq:estimate2}
\end{equation}
Intuitively, this estimate should do better because it is as if an infinite number of draws of $D^*_i$ were made for every $\theta_i$. Again, consider the case $M=1$. Viewed over the joint distribution of $\theta$ and $D$, the variable $\hat R_1$ is a uniform random variable over $[0,1]$ (compare to $\hat P_1$ which was Bernoulli). To see this, first note that the random variable $\Prob\{f(D^*_i, \theta_i) \geq f(D, \theta_i) \mid \theta_i,D\}$ is distributionally identical to $\Prob\{f(D^*, \theta) \geq f(D, \theta) \mid \theta,D\}=Q$, say. Then the conditional random variable $Q \mid \theta$ is uniform (for the same reason any classical p-value is uniform). Therefore $Q$ is also uniform marginally.

The estimate $\hat R_M$ is an average of uniform random variables which, regardless of any dependence, must be sub-uniform \citep[Theorem 3.A.36]{shaked07}. Therefore, remarkably, much of the stochastic behaviour of $\hat R_M$ can also be understood by the methods of this article. We have shown:
\begin{lemma}
Let $f$ be a function of $D$ and $\theta$, which in turn have a joint distribution such that $f(D,\theta)$ is an absolutely continuous random variable. For a fixed $M$, let $\theta_1, \ldots, \theta_M$ be replicates of $\theta$ given $D$, with arbitrary dependence, and let $D^*_i$ be an independent replicate of $D$ given $\theta_i$, for $i=1, \ldots, M$. Then the estimate $\hat R_M$, defined in \eqref{eq:estimate2}, is sub-uniform. In particular, $\Prob(\hat R_M \leq \alpha) \leq 2 \alpha$, for $\alpha \in [0,1/2]$.
\end{lemma}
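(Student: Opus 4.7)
The plan is to exploit the fact that, unlike $\hat P_M$ which uses a hard indicator, each summand of $\hat R_M$ is already a true conditional probability taken with $\theta_i$ held fixed. Writing $Q_i = \Prob\{f(D^*_i, \theta_i) \geq f(D, \theta_i) \mid \theta_i, D\}$, the first step is to observe that $Q_i$ is distributionally identical to $Q = \Prob\{f(D^*, \theta) \geq f(D, \theta) \mid \theta, D\}$, because $(\theta_i, D)$ has the same joint law as $(\theta, D)$ and $D^*_i$ is drawn from the same conditional distribution as $D^*$. So all $Q_i$ share the marginal law of $Q$.

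Next I would argue that $Q$ is uniform on $[0,1]$ marginally. Conditionally on $\theta$, the quantity $Q$ is the classical tail probability of the continuous random variable $f(D,\theta)$ against an independent copy $f(D^*,\theta)$ with the same conditional distribution; this is the standard p-value construction, so $Q \mid \theta \sim \mU[0,1]$. Marginalising over $\theta$ keeps the uniform distribution. Hence each $Q_i$ is uniform on $[0,1]$.

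Now $\hat R_M = M^{-1} \sum_{i=1}^M Q_i$ is an average of $M$ random variables each having the uniform distribution on $[0,1]$, although they are in general dependent through the dependence of the $\theta_i$. The key invocation is Theorem 3.A.36 in \citet{shaked07}, which states that an arithmetic mean of identically distributed random variables is dominated in the convex order by any single one of them, irrespective of their joint distribution. Applying this with the $Q_i$ gives $\hat R_M \leq_{cx} Q_1 \sim \mU[0,1]$, i.e. $\hat R_M$ is sub-uniform.

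The final clause $\Prob(\hat R_M \leq \alpha) \leq 2\alpha$ for $\alpha \in [0,1/2]$ is then an immediate consequence of Corollary \ref{cor:two_alpha}. No step here is really an obstacle; the only subtlety worth flagging is the need to rely on a convex-order inequality that tolerates arbitrary dependence among the $Q_i$, which is precisely why the cited result in \citet{shaked07} is the right hammer rather than, say, Jensen's inequality applied naively.
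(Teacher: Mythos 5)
Your argument is correct and follows the paper's proof essentially verbatim: identify each summand's marginal law with that of $Q$, show $Q\mid\theta$ (and hence $Q$) is uniform by the classical p-value argument, and then invoke \citet[Theorem 3.A.36]{shaked07} to conclude that the dependence-agnostic average of identically distributed uniforms is sub-uniform, with the $2\alpha$ bound following from Corollary~\ref{cor:two_alpha}.
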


\section{Discussion}
We have shown that the family of distributions that are less variable than uniform on $[0,1]$, in the convex order, fully characterises the frequency behaviour of posterior predictive p-values. From the properties of this order we established various probability bounds that can be used for conservative testing. Most of the resulting recommendations are straightforward, e.g., multiply the p-value by two or, Fisher's method is asymptotically conservative.

There are other approaches to Bayesian model-checking, such as partial \citep{bayarri00} or recalibrated \citep{hjort06} predictive p-values, which circumvent any need for bounds by creating a perfectly uniform statistic. Of course these methods have their own problems (mostly an implementation and computational burden) but they do address an issue that remains largely unsolved in this article, which is that for everyday models and data, posterior predictive p-values do seem to be very conservative. 

A feature we have observed is that this is certainly true with relatively simple models. However, we anticipate that in more structured, complex models the full spectrum of sub-uniform distributions could occur. In particular, `robust' models, for which parameter estimates become \emph{less} certain as the data become more anomalous, are likely to generate posterior predictive p-values with non-conservative characteristics.

That being said, one of the key objectives in the future has to be to find simply identifiable sub-classes of models and tests for which our bounds can be reduced. For example, we conjecture that the p-values of \citet{rubin1984bayesianly}, Equation \eqref{eq:rubin}, which do not allow the test to depend on the parameter, can be bounded differently.

\section*{Acknowledgements}
PRD is funded by the Heilbronn Institute for Mathematical Research. DJL is funded by the Wellcome Trust and Royal Society on Grant Number WT104125AIA.
\appendix
\section*{Appendix}
\begin{proof}[Theorem \ref{thm:continuous_coupling}]
It is straightforward to prove (and already known) that the existence of the martingale representation implies the convex order, by Jensen's inequality. Here we focus on the converse statement.  We will rely on the properties of integrated distribution functions, given at the beginning of Section \ref{sec:subuniform}. 

Let $\phi_X$ and $\phi_Y$ be the integrated distribution functions of $\mu$ and $\nu$ respectively, so that $\phi_X(x) \leq \phi_Y(x)$ for $x \in \R$. If $\phi_X(x) = \phi_Y(x)$ for all $x$ then let $Y \mid X = X$ and the proof is finished. Otherwise, because both functions are continuous the set $\{x \in \R: \varphi_X (x) < \varphi_Y(x)\}$ can be partitioned into a countable set of open intervals $C_i, i\in\N$. Consider one such interval, $C=(a,b)$ (allowing $a=-\infty$ and $b = \infty$). First we show that it is possible to construct a linear interpolation of $\phi_X$ over $C$, denoted $\phi_X^*$, at a set of points of $\mu$-measure 0 such that $\phi^*_X(x) \leq \phi_Y(x)$ for $x \in C$. Choose a point $x_0 \in C$ of $\mu$-measure 0 and fix some $\beta \in (0,1)$. 
We construct the interpolating points $x_j, j\in\Z$ recursively from $x_0$. We show how to construct $x_1$ from $x_0$, then $x_2$ from $x_1$ and so on. The interpolating points $x_{-1}, x_{-2}, \ldots$ are created similarly. For $j \in \N$ let 
\begin{multline*}
x_{j+1}' = \sup\{x \in [x_{j}, b): \forall \alpha \in [0,1]: \alpha \phi_X(x_{j}) + (1-\alpha)\phi_X(x) \leq \phi_Y[\alpha x_{j} + (1-\alpha) x]\}.
\end{multline*}
If $x'_{j+1} = \infty$, which is only possible if $b = \infty$, let $x_{j+1} = x'_{j+1} = b = \infty$. Otherwise choose $x_{j+1}$ to be a point in $[x_{j+1}'-\beta(x_{j+1}'-x_{j}), x_{j+1}']$ such that $\mu(\{x_{j+1}\}) = 0$. Stop the procedure if $x_{j+1}=b$. We claim that for any $x \in [x_0,b)$, $\sup(j \in \N_0: x_j \leq x) < \infty$. Otherwise, for any $\epsilon > 0$ there would exist $j \in \N$ such that $x_{j+1}'-x_{j} \leq \epsilon$ and a solution for $\alpha$ to 
$\alpha \phi_X(x_{j}) + (1-\alpha)\phi_X(y) \geq \phi_Y[\alpha x_{j} + (1-\alpha) y]$, where $y = \min(x_{j} + 2\epsilon, x)$. Then $\phi_Y(x_j) \leq \phi_X(y) \leq \phi_X(x_j) + (y - x_j)$, first using the fact that both $\phi_X$ and $\phi_Y$ are non-decreasing and then using $\phi_X^+ \leq 1$. This implies $\phi_Y(x_{j})-\phi_X(x_{j}) \leq 2 \epsilon$.  Therefore the functions $\phi_X$ and $\phi_Y$ would come arbitrarily close to each other over the closed interval $[x_0, x]$. Since both are continuous, by the extreme value theorem we would have $\phi_X(z) = \phi_Y(z)$ for some $z\in[x_0,x]$, which is impossible since $z \in C$. 

By a similar construction we form $x_{-1}, x_{-2}, \ldots$ The set of all intervals $(x_j, x_{j+1})$ constructed for every $C_i$ is countable. 
Denote these by $I_n=(l_n, u_n), n\in\N$, let $S = \R \setminus (\cup I_n)$ and finally define the Markov kernel from $\R$ onto $\R$,
\[K(x,\rd y) = \begin{cases} \delta_x & x \in S,\\
k_n(x,\rd y) & x \in I_n,\end{cases}\]
where $\delta_x$ denotes the point mass at $x$, and $k_n(x,\rd y)$ is a Markov kernel with the following properties. For every $x \in I_n$, $k_n(x, \rd y)$ is absolutely continuous, $\int_{I_n} k_n(x, \rd y) = 1$ and $\int_{I_n} y k_n(x, \rd y) = x$. Furthermore, for any measurable set $A \subseteq I_n$ such that 
\[\int_{I_n} k_n(x, A) \mu(\rd x) = 0,\] 
there is no $p$ in the support of $\mu$ such that $k_n(p, A)>0$. 

An example of an admissible choice for $k_n$ would be for $k_n(x, \rd y)$ to be a uniform distribution over the interval centered at $x$ with length $2\min(x-l_n, u_n-x)$. To see this, suppose that $k_n(p, A) = v > 0$ for $p$ in the support of $\mu$. It is clear from our choice of $k_n$ that there is an open neighbourhood $N$ of $p$ for which $\sup_E|k_n(p, E) - k_n(x,E)| \leq v/2$ for any $x \in N$, the supremum taken over sets $E$ in the $\sigma$-algebra of $\mu$. Therefore, 
\begin{align*} 
\int_{I_n} k_n(x, A) \mu(\rd x) & \geq \int_{N} k_n(x, A) \mu(\rd x)\\
& \geq v/2 \int_N \mu(\rd x)> 0.
\end{align*}
The last inequality comes from $N$ being an open neighbourhood of a supported point.

Let $\tilde X$ be the random variable that results from applying $K$ to $X$. We now show $\tilde \mu \leq_{cx} \nu$ where $\tilde \mu$ is the probability measure of $\tilde X$. For any $x \in S$ the kernel does not allow movement from the right to the left or the left to the right of $x$. Therefore, for $x \in S$, 
\begin{align*}
\phi_{\tilde X}(x) &= \E\{(x - \tilde X)_+\} \\
&=\Prob(\tilde X \leq  x) \E(x - \tilde X\mid \tilde X \leq x)\\
&= \E\{(x - X)_+\} = \phi_X(x),
\end{align*}
using the fact that $K$ is mean-preserving. For $x \in I_n$ the convexity of $\phi_{\tilde X}$ implies 
\begin{align*}\phi_{\tilde X}(x) & \leq \frac{(x-l_n)}{(u_n - l_n)} \phi_{\tilde X}(l_n) + \frac{(u_n-x)}{(u_n - l_n)} \phi_{\tilde X}(u_n) \\
&= \frac{(x-l_n)}{(u_n - l_n)} \phi_{X}(l_n) + \frac{(u_n-x)}{(u_n - l_n)} \phi_{X}(u_n) \leq \phi_Y(x),
\end{align*}
using $l_n, u_n \in S$ for the equality and the construction of $l_n$ and $u_n$ for the second inequality. Since $\tilde \mu$ has the same expectation as $\mu$ and therefore $\nu$, we conclude $\tilde \mu \leq_{cx} \nu$ \citep[p. 110]{shaked07}.

Finally, by Strassen's theorem there exists a random variable $Y$ with probability measure $\nu$ such that $\E (Y \mid \tilde X) = \tilde X$. This random variable satisfies $\E (Y \mid X) = X$. For $p \in S$, the random variable $Y \mid X=p$ is singular except potentially at the set of interpolating points $\{x_n, n\in\N\}$ which was constructed to have $\mu$-measure $0$. Suppose there exists a supported point $p \in I_n$ and $q \in \R$ such that $\E(\delta_q \mid  X = p) > 0$. Then since 
\[\E (\delta_q \mid X = p) = \int \E(\delta_q \mid \tilde X = y) k_n(p, \rd y),\]
there must exist a set $A \in I_n$ such that $k_n(p, A) > 0$ and $\E(\delta_q| \tilde X \in A)>0$. Since $Y$ is absolutely continuous we also have $\tilde \mu(A) = 0 = \int_{I_n} k_n(x,A) \mu(\rd x)$ violating the construction of $k_n$. Hence there are no supported points in $\cup I_n$, and only potentially a set of $\mu$-measure $0$ in $S$, such that $Y \mid X$ is neither singular nor absolutely continuous.
\end{proof}

\begin{proof}[Lemma \ref{lem:gen_2alpha}]
Let $\phi_X$ and $\phi_Y$ denote the integrated distribution functions of $X$ and $Y$ respectively. The function $\phi_X$ is non-negative, continuous and convex, therefore the set $\{w: w (x-\alpha) \leq \phi_Y(x), \: x \in \R\}$ is non-empty (it contains $0$) and closed. Hence, the maximum in \eqref{eq:h} is well-defined. For $x \in \R$ we have 
\begin{align*}
F_X(\alpha)(x-\alpha) &= \phi_X^+(\alpha) (x-\alpha) \\
& \leq \phi_X(\alpha) + (x-\alpha) \phi_X^+(\alpha)\\
& \leq \phi_X(x) \leq \phi_Y(x),
\end{align*}
using the non-negativity and convexity of $\phi_X$. Hence, $F_X(\alpha) \leq h$. If $h = 1$ then the singular random variable $\tilde X = \E(Y)$ satisfies $\tilde X \leq_{cx} Y$ and $F_{\tilde X}(\alpha) = h$. Otherwise, the set $\{x \in \R: h (x-\alpha) \leq \phi_Y(x)\}$ is closed and non-empty, again containing $0$. Therefore $\beta = \max\{x: h (x-\alpha) \leq \phi_Y(x)\}$ is well-defined and satisfies $h (\beta - \alpha) = \phi_Y(\beta)$. If $h \leq 1$, consider 
\[
\phi(x) = \begin{cases} 0 & x \leq \alpha,\\
h (x - \alpha) & \alpha \leq x \leq \beta,\\
\phi_Y(x) & x \geq \beta. \end{cases}
\]
This is a valid integrated distribution function, in particular, it is convex because $\phi^+_Y(\beta) \geq h$ (otherwise $\phi_Y$ and $h(x-\alpha)$ would cross). Moreover, $\phi \leq \phi_Y$ and $\lim_{x \rightarrow \infty} \{\phi(x)-\phi_Y(x)\} = 0$. Let $\tilde X$ be a random variable with integrated distribution function $\phi$. Then $\tilde X \leq_{cx} Y$, and $F_{\tilde X}(\alpha) = \phi^+(\alpha) = h$. 
\end{proof}
The proofs of Lemmas \ref{lem:fisher_asymp} and \ref{lem:fisher_finite} both need the following result. 
\begin{lemma}\label{lem:mean_and_var}
Let $P$ be a sub-uniform probability measure. Then either i) $P$ is uniform on $[0,1]$  or ii)
\[\E\{-\log(P)\} < \E\{-\log(U)\} = 1; \quad \var\{-\log(P)\} < \var\{-\log(U)\} = 1, \]
where $U$ is a uniform random variable on $[0,1]$
\end{lemma}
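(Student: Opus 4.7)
The plan is to apply the convex order inequality with two carefully chosen convex test functions, and then to upgrade the mean bound to a strict inequality via Strassen's martingale coupling combined with the strict convexity of $-\log$; the strict variance bound will then follow algebraically.

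For the mean, I would take $h(x) = -\log x$, which is convex on $(0,1]$, so sub-uniformity immediately gives $\E\{-\log P\} \leq \E\{-\log U\} = 1$. For the strict version, suppose $\E\{-\log P\} = 1$ and apply Strassen's theorem (Theorem~\ref{thm:strassen}) to obtain a coupling $(P, U)$ with $\E(U \mid P) = P$ and $U$ uniform on $[0,1]$. Jensen's inequality then reads $\E\{-\log U\} \geq \E\{-\log \E(U \mid P)\} = \E\{-\log P\}$, with both sides equal to $1$; since $-\log$ is strictly convex, equality in Jensen forces $U = \E(U \mid P) = P$ almost surely, so $P$ is uniform. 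Contrapositively, if $P$ is not uniform then $\E\{-\log P\} < 1$.

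For the variance I would take $h(x) = (\log x + 1)^2$, whose second derivative $h''(x) = -2\log(x)/x^2$ is non-negative on $(0,1]$, so $h$ is convex there. Since $-\log U$ is standard exponential, $\E\{h(U)\} = \var(\log U) + (\E\{\log U\} + 1)^2 = 1$, so sub-uniformity gives $\E\{h(P)\} \leq 1$. Expanding the square produces
\[\E\{(\log P + 1)^2\} = \var(-\log P) + (1 - \E\{-\log P\})^2 \leq 1,\]
so $\var(-\log P) \leq 1 - (1 - \E\{-\log P\})^2$. When $P$ is non-uniform, the first step gives $\E\{-\log P\} < 1$, the subtracted square is strictly positive, and therefore $\var(-\log P) < 1$.

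The only real obstacle is establishing the strict mean inequality; once it is in place, the variance bound drops out algebraically from the expansion, with no need for a separate strictness argument on the variance side. Minor integrability concerns, including the fact that $P>0$ almost surely, can be dispatched by truncating $-\log$ with bounded convex approximations and applying monotone convergence.
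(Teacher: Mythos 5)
Your proof is correct and follows essentially the same route as the paper: both use strict convexity of $-\log$ to force the strict mean inequality when $P$ is non-uniform, and both use convexity of $(\log x+1)^2$ together with the strict mean inequality to deduce the strict variance inequality (your expansion $\E\{(\log P+1)^2\} = \var(-\log P) + (1-\E\{-\log P\})^2$ is algebraically the same as the paper's chain that uses the fact that the second moment about the mean is minimal). The only real difference is at the strictness step for the mean: the paper simply cites a known result (\citet[Theorem 3.A.43]{shaked07}, that equality of $\E\{h(X)\}$ and $\E\{h(Y)\}$ for a strictly convex $h$ under $X \leq_{cx} Y$ forces $X \stackrel{d}{=} Y$), whereas you re-derive this fact from scratch using Strassen's martingale coupling and the equality case in conditional Jensen. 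That makes your argument slightly more self-contained; the paper's is shorter because it leans on the reference.
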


\begin{proof}
\citet[Theorem 3.A.43]{shaked07} provide the following theorem. If $X \leq_{cx} Y$ and for some strictly convex function $h$ we have $\E\{h(X)\} = \E\{h(Y)\}$ then $X$ is distributed as $Y$. The function $-\log(x)$ is strictly convex, therefore either $P$ is uniform or $\E\{-\log(P)\} < \E\{-\log(U)\}$. If the latter is true, then
\begin{align*}
\var\{-\log(P)\} &= \E[-\log(P) - \E\{-\log(P)\}]^2 \\
&< \E[-\log(P) - \E\{-\log(U)\}]^2,\\
&\leq \E\{\log(U) + 1\}^2\\
&=\var\{-\log(U)\}
\end{align*}
In the second line, the fact that the expected squared distance from the mean is smaller than from any other point is used, and in the fourth we used the fact that $(\log(x)+1)^2$ is convex.
\end{proof}

\begin{proof}[Lemma \ref{lem:fisher_asymp}]
Let $X_i = -2\log(P_i)$, $\mu_X = \E(X_i)$, $Y_i = -2 \log(U_i)$, and $\mu_Y=\E(Y_i)$. If $\mu_X = \mu_Y$ then by Lemma \ref{lem:mean_and_var} the $P_i$ are uniform on $[0,1]$ and we are done. The statement is also true if $\alpha=1$. Therefore assume $\mu_X < \mu_Y$, $\alpha \in (0,1)$ and let $t \in (\mu_X, \mu_Y)$. By the weak law of large numbers there exists an $n' \in \N$ such that, for $m \geq n'$,

\[\Prob\left(\sum_{i=1}^m Y_i \geq m t\right) \geq \alpha,\]
so that $t_{\alpha,m} \geq mt$. Therefore, for $m \geq n'$,
\begin{align*}
\Prob\left(\sum_{i=1}^m X_i \geq t_{\alpha,m}\right) \leq \Prob\left(\sum_{i=1}^m X_i \geq m t\right).
\end{align*}
Again by the law of large numbers, the right-hand side tends to zero. Hence there exists an $n \geq n'$ such that it is bounded by $\alpha$ for $m \geq n$.
\end{proof}

\begin{proof}[Lemma \ref{lem:fisher_finite}]
Let $R_m = -2 \sum \log(P_i)$. From Corollary \ref{cor:two_alpha}, we have $U_i/2 \leq_{st} P_i$, for $i = 1, \ldots, m$, where $U_1, \ldots, U_m$ are independent uniform random variables on $[0,1]$ and  $\leq_{st}$ denotes the usual stochastic order \citep[Chapter A.1]{shaked07}. This implies $-\log(P_i) \leq_{st} -\log(U_i/2)$. Because the usual stochastic order is closed under convolution \citep[Theorem 1.A.3]{shaked07}, we have $R_m \leq_{st} -2 \sum \log(U_i) + 2 m \log 2$. The sum $-2 \sum \log(U_i)$ has a $\chi^2$ distribution with $2 m$ degrees of freedom, proving the first bound. Lemma \ref{lem:mean_and_var} implies $\E(R_m) \leq 2m$ and $\var(R_m) \leq 4m$. Therefore, using Cantelli's inequality,
\begin{align*}
\Prob[R_m \geq x] & \leq \var(R_m)/\left[\var(R_m) + \{x-\E(R_m)\}^2\right]\\
& \leq \var(R_m)/\left[\var(R_m) + \{x-2m\}^2\right]\\
& \leq m/\left[m + \{(x-2m)/2\}^2\right],
\end{align*}
for $x \geq 2 m$. This proves the second bound. Finally, the moment generating function of $R_m$ is $\E\{\exp(t R_m)\} = \prod \E(P_i^{-2t})$ for $t \geq 0$. For $t \in [0,1/2)$ each $\E(P_i^{-2t}) \leq \E(U^{-2t}) = (1-2t)^{-1}$ since $x^{-2t}$ is a convex function in $x$ for $x \in [0,1]$. 
Using Markov's inequality,
\begin{align*}
\Prob(R_m\geq x) &= \Prob\{\exp(t R_m) \geq \exp(t x)\}\\
& \leq \exp(-t x)\E\{\exp(t R_m)\} \\
& \leq \exp(-t x - m \log(1-2t)),
\end{align*}
for $t \in [0,1/2)$. The minimum of this function is at $t = 1/2-m/x$, giving the third bound.
\end{proof}
\bibliographystyle{apalike} 
\bibliography{pppvalues}
\end{document}